\newtheorem{Theorem}{Theorem}[section]
\newtheorem{Definition}[Theorem]{Definition}
\newtheorem{Lemma}[Theorem]{Lemma}
\newtheorem{Observation}[Theorem]{Observation}
\newtheorem{OpenProblem}[Theorem]{Open Problem}
\def\inst#1{$^{#1}$}
\title{Chromatic symmetric functions and $H$-free graphs}
\author{
Ang\`ele M. Hamel\inst{1}
    \and Ch\'inh T. Ho\`ang\inst{1}
    \and Jake E. Tuero \inst{1}  }
\begin{document}
\maketitle

\begin{center}
{\footnotesize

\inst{1} Department of Physics and Computer Science, Wilfrid Laurier
University, \\Waterloo, Ontario, Canada }

\end{center}

\begin{abstract}
Two celebrated conjectures in chromatic symmetric function theory concern the positivity chromatics symmetric functions of claw-free graphs.  Here we extend the claw-free idea to general graphs and consider the e-positivity question for $H$-free graphs where $H = \{claw, F\}$ and $H=$\{{\em claw}, $F$, {\em co-}$F$\},  where  $F$ is a four-vertex graph.  We settle the question for all cases except $H=$\{{\em claw, co-diamond}\}, and we provide some partial results in that case.
\end{abstract}

\section{Introduction}

A key area of investigation in symmetric function theory concerns the  e-positivity, and/or Schur positivity, of a particular class of symmetric functions. In chromatic symmetric function theory there are two celebrated conjectures in this regard that focus on claw-free graphs, where the claw is the four-vertex  bipartite graph $K_{1,3}$ (see Figure \ref{fig:4vertex-graphs}). One conjecture, due to Stanley and Stembridge \cite{SS}, hypothesizes   that the chromatic symmetric function of a claw-free incomparabilty graph is e-positive. 
The second conjecture, due to Stanley \cite{S} with credit to Gasharov, hypothesizes that the chromatic symmetric function of a claw-free graph is Schur positive (for definitions see Section \ref{Background}).


In parallel to this, in graph theory, much effort has been spent in characterizing the chromatic characteristics of graphs that are $H$-free, where $H$ is some set of induced subgraphs.  A key question in this domain is, can  the chromatic number of a $H$-free graph be determined in polynomial time?  This answer is known for large classes of graphs, but, interestingly, the classification for all combinations of subgraphs with four vertices (of which the claw is obviously one) is not yet complete \cite{LM}. 
Thus, combining these two areas, it is a natural question to generalize the  claw-free conjectures and ask about other $H$-free graphs: ``For which $H$-free graphs are their chromatic symmetric functions e-positive?" and to ask in particular, ``For which $H$-free graphs are their chromatic symmetric functions  e-positive, where $H$ is the claw along with one other four-vertex graphs?"  Obviously the corresponding Schur positivity questions would be subcases of the Gasharov-Stanley conjecture.

Formally, let $H$ be a set of four-vertex graphs. By abuse of notation we say a graph is e-positive if its chromatic symmetric function is e-positive. Then we want to know for which sets $H$ of four-vertex graphs, an $H$-free graph is e-positive (or not). There are eleven graphs on four vertices, see Figure \ref{fig:4vertex-graphs}. The claw is not e-positive, since $X_{K_{1,3}} = e_4+5e_{3,1} -2e_{2,2} + e_{2,1,1}$, and we include it always as part of $H$ (as an aside, note that some intuition as to why the claw is not $e$-positive can be gained by considering partition dominance and the concept of {\em nice} graphs, as defined in Stanley \cite{Sold}). We consider two different directions.

First, $H=$\{claw, $F$\} for $F$ a single four-vertex graph. By using an example of Stanley, given here in Figure \ref{fig:ThreeSun}, we see that this example does not contain diamond, co-claw, $K_4$, $4K_1$, $C_4$, or $2K_2$, and we can thus show that a graph that is free of claw paired with each of these is not necessarily e-positive.
The remaining four-vertex graphs are $P_4$, paw, co-paw, and co-diamond.
By a result of Tsujie \cite{Tsujie}, if $H$ contains a claw and a $P_4$, then $H$-free graphs are e-positive.
In this paper we show that if $H$ contains a paw, or if $H$ contains a co-paw, then $H$-free graphs are e-positive.  Thus the only outstanding case of $H=$\{claw, $F$\} is $H=$\{claw, co-diamond\}.
In Section \ref{sec:clawF} we present some partial results on this case.
We summarize the $H=$\{{\em claw, $F$}\} results in Table \ref{tablecases}.

\begin{figure}\label{tablecases}
\begin{center}
\begin{tabular}{||c|c|c||}
\hline \hline
Set  $H$ & Positivity & Reference\\
\hline \hline
claw, $P_4$ & e-positive &  \cite{Tsujie} \\  \hline
claw, paw & e-positive &  Theorem \ref{thm:claw_paw_free} \\  \hline
claw, co-paw & e-positive &  Theorem \ref{thm:claw_copaw_free} \\  \hline
claw, co-diamond & unknown   & unknown \\ \hline
claw, diamond  & not  necessarily e-positive & Lemma \ref{thm:somethm} \\  \hline
claw, co-claw & not necessarily e-positive & Lemma \ref{thm:somethm} \\  \hline
claw, $K_4$ & not necessarily e-positive & Lemma \ref{thm:somethm} \\  \hline
claw, $4K_1$ & not necessarily e-positive & Lemma \ref{thm:somethm} \\  \hline
claw, $C_4$ & not necessarily e-positive & Lemma \ref{thm:somethm} \\  \hline
claw, $2K_2$ & not necessarily e-positive & Lemma \ref{thm:somethm} \\  
\hline \hline
\end{tabular}
\end{center}
\caption{Table of e-positivity of pairs of four-vertex graphs}
\end{figure}

Second, $H=$\{{\em claw}, $F$, co-$F$\}, where $F$ is a single four-vertex graph and co-$F$ is the graph complement of $F$.   Again, the Stanley example, Figure \ref{fig:ThreeSun}, takes care of a number of cases and our theorems in Section \ref{sec:clawF} handle all but one of the rest. The last case, $H=$\{{\em claw, diamond, co-diamond}\} is handled by a case-by-case argument.

\begin{figure}\label{tablecases2}
\begin{center}
\begin{tabular}{||c|c|c||}
\hline \hline
Set  $H$ & Positivity & Reference\\
\hline \hline
claw, $P_4$ & e-positive &  \cite{Tsujie} \\  \hline
claw, paw, co-paw & e-positive &  Theorem \ref{thm:clawpawcopaw} \\  \hline
claw, diamond, co-diamond & e-positive &  Theorem \ref{thm:clawdiamondcodiamond} \\  \hline
claw, co-claw & not necessarily e-positive & Lemma \ref{thm:somethm} \\  \hline
claw, $C_4, 2K_2$ & not necessarily e-positive &  Theorem \ref{thm:negative} \\  \hline
claw, $K_4, 4K_1$ & not necessarily e-positive &  Theorem \ref{thm:negative} \\  \hline
\hline \hline
\end{tabular}
\end{center}
\caption{Table of e-positivity of claw plus $F$ and co-$F$.}
\end{figure}




\begin{figure}
\begin{center}
\begin{tikzpicture} [scale = 1.25]
\tikzstyle{every node}=[font=\small]

\newcommand{\size}{1}

\newcommand{\p4}{1}{
    \path (\size * 4, 0) coordinate (g1);
    \path (g1) +(-\size, 0) node (g1_1){};
    \path (g1) +(0, 0) node (g1_2){};
    \path (g1) +(\size, 0) node (g1_3){};
    \path (g1) +(2 * \size, 0) node (g1_4){};
    \foreach \Point in {(g1_1), (g1_2), (g1_3), (g1_4)}{
        \node at \Point{\textbullet};
    }
    \draw   (g1_1) -- (g1_2)
            (g1_2) -- (g1_3)
            (g1_3) -- (g1_4);
    \path (g1) ++(\size  / 2,-\size / 2) node[draw=none,fill=none] { {\large $P_4$}};
}

\newcommand{\kfour}{2}{
    \path (0, - \size * 2) coordinate (g2);
    \path (g2) +(0, 0) node (g2_1){};
    \path (g2) +(0, \size) node (g2_2){};
    \path (g2) +(\size, \size) node (g2_3){};
    \path (g2) +(\size, 0) node (g2_4){};
    \foreach \Point in {(g2_1), (g2_2), (g2_3), (g2_4)}{
        \node at \Point{\textbullet};
    }
    \draw   (g2_1) -- (g2_2)
            (g2_1) -- (g2_3)
            (g2_1) -- (g2_4)
            (g2_2) -- (g2_3)
            (g2_2) -- (g2_4)
            (g2_3) -- (g2_4);
    \path (g2) ++(\size  / 2,-\size / 2) node[draw=none,fill=none] { {\large $K_4$}};
}

\newcommand{\diam}{3}{
    \path (\size * 2, - \size * 2) coordinate (g3);
    \path (g3) +(0, 0) node (g3_1){};
    \path (g3) +(0, \size) node (g3_2){};
    \path (g3) +(\size, \size) node (g3_3){};
    \path (g3) +(\size, 0) node (g3_4){};
    \foreach \Point in {(g3_1), (g3_2), (g3_3), (g3_4)}{
        \node at \Point{\textbullet};
    }
    \draw   (g3_1) -- (g3_2)
            (g3_1) -- (g3_3)
            (g3_1) -- (g3_4)
            (g3_2) -- (g3_3)
            (g3_3) -- (g3_4);
    \path (g3) ++(\size  / 2,-\size / 2) node[draw=none,fill=none] { {\large $diamond$}};
}

\newcommand{\cfour}{4}{
    \path (\size * 4, - \size * 2) coordinate (g4);
    \path (g4) +(0, 0) node (g4_1){};
    \path (g4) +(0, \size) node (g4_2){};
    \path (g4) +(\size, \size) node (g4_3){};
    \path (g4) +(\size, 0) node (g4_4){};
    \foreach \Point in {(g4_1), (g4_2), (g4_3), (g4_4)}{
        \node at \Point{\textbullet};
    }
    \draw   (g4_1) -- (g4_2)
            (g4_1) -- (g4_4)
            (g4_2) -- (g4_3)
            (g4_3) -- (g4_4);
    \path (g4) ++(\size  / 2,-\size / 2) node[draw=none,fill=none] { {\large $C_4$}};
}

\newcommand{\paw}{5}{
    \path (\size * 6, - \size * 2) coordinate (g5);
    \path (g5) +(0, 0) node (g5_1){};
    \path (g5) +(0, \size) node (g5_2){};
    \path (g5) +(\size, \size) node (g5_3){};
    \path (g5) +(\size, 0) node (g5_4){};
    \foreach \Point in {(g5_1), (g5_2), (g5_3), (g5_4)}{
        \node at \Point{\textbullet};
    }
    \draw   (g5_1) -- (g5_2)
            (g5_2) -- (g5_3)
            (g5_2) -- (g5_4)
            (g5_3) -- (g5_4);
    \path (g5) ++(\size  / 2,-\size / 2) node[draw=none,fill=none] { {\large $paw$}};
}

\newcommand{\claw}{6}{
    \path (\size * 8, - \size * 2) coordinate (g6);
    \path (g6) +(0, 0) node (g6_1){};
    \path (g6) +(0, \size) node (g6_2){};
    \path (g6) +(\size, \size) node (g6_3){};
    \path (g6) +(\size, 0) node (g6_4){};
    \foreach \Point in {(g6_1), (g6_2), (g6_3), (g6_4)}{
        \node at \Point{\textbullet};
    }
    \draw   (g6_1) -- (g6_2)
            (g6_2) -- (g6_4)
            (g6_2) -- (g6_3);
    \path (g6) ++(\size  / 2,-\size / 2) node[draw=none,fill=none] { {\large $claw$}};
}

\newcommand{\cokfour}{7}{
    \path (0, - \size * 4) coordinate (g7);
    \path (g7) +(0, 0) node (g7_1){};
    \path (g7) +(0, \size) node (g7_2){};
    \path (g7) +(\size, \size) node (g7_3){};
    \path (g7) +(\size, 0) node (g7_4){};
    \foreach \Point in {(g7_1), (g7_2), (g7_3), (g7_4)}{
        \node at \Point{\textbullet};
    }

    \path (g7) ++(\size  / 2,-\size / 2) node[draw=none,fill=none] { {\large $4K_1$}};
}

\newcommand{\codiamond}{8}{
    \path (\size * 2, - \size * 4) coordinate (g8);
    \path (g8) +(0, 0) node (g8_1){};
    \path (g8) +(0, \size) node (g8_2){};
    \path (g8) +(\size, \size) node (g8_3){};
    \path (g8) +(\size, 0) node (g8_4){};
    \foreach \Point in {(g8_1), (g8_2), (g8_3), (g8_4)}{
        \node at \Point{\textbullet};
    }
    \draw   (g8_3) -- (g8_4);
    \path (g8) ++(\size  / 2,-\size / 2) node[draw=none,fill=none] { {\large co-$diamond$}};
}

\newcommand{\cocfour}{9}{
    \path (\size * 4, - \size * 4) coordinate (g8);
    \path (g8) +(0, 0) node (g8_1){};
    \path (g8) +(0, \size) node (g8_2){};
    \path (g8) +(\size, \size) node (g8_3){};
    \path (g8) +(\size, 0) node (g8_4){};
    \foreach \Point in {(g8_1), (g8_2), (g8_3), (g8_4)}{
        \node at \Point{\textbullet};
    }
    \draw (g8_1) -- (g8_2)
          (g8_3) -- (g8_4);
    \path (g8) ++(\size  / 2,-\size / 2) node[draw=none,fill=none] { {\large $2K_2$}};
}

\newcommand{\copaw}{10}{
    \path (\size * 6, - \size * 4) coordinate (g9);
    \path (g9) +(0, 0) node (g9_1){};
    \path (g9) +(0, \size) node (g9_2){};
    \path (g9) +(\size, \size) node (g9_3){};
    \path (g9) +(\size, 0) node (g9_4){};
    \foreach \Point in {(g9_1), (g9_2), (g9_3), (g9_4)}{
        \node at \Point{\textbullet};
    }
    \draw   (g9_1) -- (g9_2)
            (g9_2) -- (g9_3);
    \path (g9) ++(\size  / 2,-\size / 2) node[draw=none,fill=none] { {\large co-$paw$}};
}

\newcommand{\coclaw}{11}{
    \path (\size * 8, - \size * 4) coordinate (g10);
    \path (g10) +(0, 0) node (g10_1){};
    \path (g10) +(0, \size) node (g10_2){};
    \path (g10) +(\size, \size) node (g10_3){};
    \path (g10) +(\size, 0) node (g10_4){};
    \foreach \Point in {(g10_1), (g10_2), (g10_3), (g10_4)}{
        \node at \Point{\textbullet};
    }
    \draw   (g10_1) -- (g10_2)
            (g10_1) -- (g10_3)
            (g10_2) -- (g10_3);
    \path (g10) ++(\size  / 2,-\size / 2) node[draw=none,fill=none] { {\large co-$claw$}};
}

\end{tikzpicture}
\end{center}
\caption{All four-vertex graphs} \label{fig:4vertex-graphs}
\end{figure}
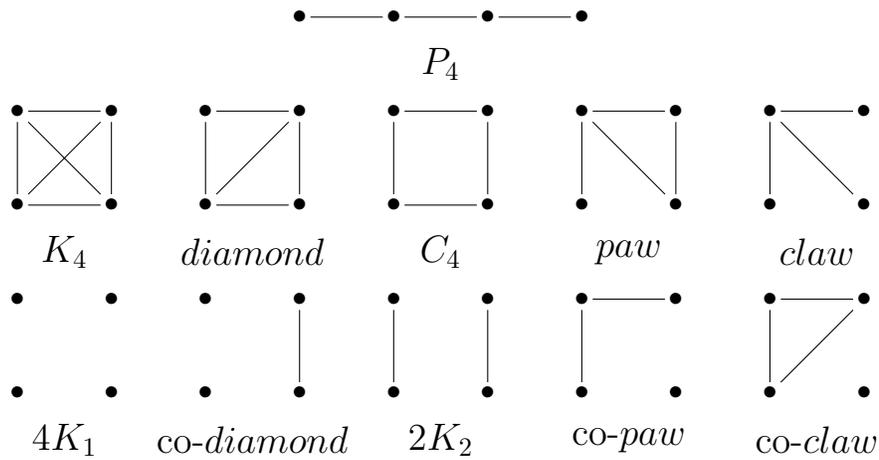

\section{Background and Notation}
\label{Background}

 The original interest in claw-free graphs in the context of chromatic symmetric functions stems from Conjecture 5.1 of Stanley and Stembridge \cite{SS}.  This interest can be traced  farther back to another historically significant conjecture, the Goulden-Jackson immanant conjecture \cite{GJ},  that hypothesizes that the expansion of the immanant of the Jacobi-Trudi matrix has nonnegative coefficients
(the immanant is similar to a determinant but with the sign replaced by another symmetric group character).
This conjecture was proved soon after by Curtis Greene \cite{Gr}.
Related to this, Stanley and Stembridge \cite{SS} investigated conjectures involving the positivity of coefficients in immanants
and conjectured  that  if  a poset is $(3+1)$-free, then the chromatic symmetric function of  its incomparability graph is e-positive (see Stanley \cite{Sold} or Stanley and Stembridge \cite{SS} for definitions).


 Gasharov \cite{G} proved that the chromatic symmetric functions of the incomparability graphs of  $(3+1)$-free posets are Schur positive. Note that if a symmetric function is e-positive it is necessarily Schur positive, but not the other way around, so this is a weaker result than the one asked for.
 The e-positivity of the chromatic symmetric function of the incomparability graph of a $(3+1)$-free poset  remains open, although there has been much progress on this question, see Guay-Paquet \cite{GP}. 

We now turn to some definitions. Further details on definitions and notation can be found in Stanley \cite{StanleyBook}.
A coloring of the set of vertices $V$ of some graph $G=(V, E)$ is a function $\kappa$ from $V$ to the positive integers $\mathbb{Z}^+$: $\kappa:V\rightarrow \mathbb{Z}^+$. A coloring $\kappa$ is proper if $\kappa(u) \not= \kappa(v)$ whenever vertex $u$ is not adjacent to vertex $v$. 
Chromatic symmetric functions were defined by Stanley \cite{Sold} as a generalization of the chromatic polynomial.  Indeed, if $x_1=x_2=x_3=\ldots =1$, this expression reduces to the chromatic polynomial for a graph.

\begin{Definition}
For a graph $G$ with vertex set $V=\{v_1, v_2, \ldots , v_N\}$ and edge set $E$, the chromatic symmetric function is defined to be
\[
X_G=\sum_{\kappa} x_{\kappa(v_1)} x_{\kappa(v_2)}\cdots x_{\kappa(v_{N})}
\]
 where the sum is over all proper colorings $\kappa$ of $G$. 
 \end{Definition}

A partition $\lambda=(\lambda_1, \lambda_2, \ldots \lambda_{\ell})$ of a positive integer $n$ is a nondecreasing sequence of positive integers: $\lambda_1 \geq \lambda_2 \geq \ldots \geq \lambda_{\ell}$, where  $\lambda_i$ is called the $i$th part of $\lambda$, $1\leq i \leq \ell$.   The transpose, $\lambda'$, of $\lambda$, is defined by its parts: $\lambda_i' =  | \{j: \lambda_j \geq i\} |$. The elementary symmetric function, $e_i(x)$, is defined as 
\[
e_i(x) = \sum_{j_1 < j_2< \cdots < j_i} x_{j_{1}} \cdots x_{j_{i}}
\]
and the elementary symmetric function, $e_\lambda(x)$, is defined as $e_\lambda(x) = e_{\lambda_{1}} e_{\lambda_{2}} \ldots e_{\lambda_{\ell}}$.

 The Schur function is another common symmetric functions.  There are a number of useful definitions of the Schur functions, but the easiest in this context is one in terms of the elementary symmetric functions:  $s_\lambda (x) = det(e_{\lambda_{i}'-i+j} (x) )_{1 \leq i, j \leq \lambda_1}$  where if $\lambda_{i}'-i+j < 0$ then $e_{\lambda_{i}'-i+j} (x)=0$.
 
 Both the set of elementary symmetric functions and the set of Schur functions each form a basis for the algebra of symmetric functions (for details, see Stanley\cite{StanleyBook}).  If a given symmetric function can be written as a nonnegative linear combination of elementary (resp.\ Schur) symmetric functions we say the symmetric function is {\em e-positive} (resp.\ {\em Schur positive}).
 
In graph theory we often discuss the idea of an $H$-free graph. Define $H$-free to be the class of graphs that do not contain any graph is $H$ as an induced subgraph.  Let $P_k$ be the chordless path on $k$ vertices and $C_k$ be the chordless cycle on $k$ vertices.  The complete graph $K_n$ is the graph on $n$ vertices such that there is an edge between all pairs of vertices.  The graph $K_3$ is called the {\em triangle}, and it's complement, which is $3K_1$, three vertices with no edges at all, is called the {\em co-triangle}.  A {\em stable set} is a set $S$ of vertices of a graph such that there are no edges between any of the vertices in $S$, e.g. a co-triangle is a stable set of size $3$. The notation $\alpha(G)$ denotes the number of vertices in the largest stable set of $G$.

We will make use of a number of known results concerning chromatic symmetric functions and e-positivity.  These lemmas are as follows:


	\begin{Lemma} \cite[Proposition 2.3]{Sold} \label{lem:chromatic_disjoint_union}
		If a finite simple graph $G$ is a disjoint union of subgraphs $G_1 \cup  G_2$, then $X_G = X_{G_{1}}X_{G_{2}}$. \\
	\end{Lemma}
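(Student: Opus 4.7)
The plan is to argue directly from the definition of the chromatic symmetric function. The crucial observation is that, because $G$ is the disjoint union of $G_1$ and $G_2$, we have $V(G) = V(G_1) \sqcup V(G_2)$ and $E(G) = E(G_1) \sqcup E(G_2)$; in particular, no edge of $G$ has one endpoint in $V(G_1)$ and the other in $V(G_2)$. Consequently, a function $\kappa \colon V(G) \to \mathbb{Z}^+$ is a proper coloring of $G$ if and only if its restrictions $\kappa_1 = \kappa|_{V(G_1)}$ and $\kappa_2 = \kappa|_{V(G_2)}$ are proper colorings of $G_1$ and $G_2$ respectively. This sets up a natural bijection between proper colorings of $G$ and ordered pairs $(\kappa_1, \kappa_2)$ of proper colorings of the two components.

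Next I would observe that under this bijection the monomial weight factors cleanly as
\[
\prod_{v \in V(G)} x_{\kappa(v)} \;=\; \Bigl(\prod_{v \in V(G_1)} x_{\kappa_1(v)}\Bigr)\Bigl(\prod_{v \in V(G_2)} x_{\kappa_2(v)}\Bigr),
\]
since the two products range over disjoint vertex sets whose union is $V(G)$. Summing over all proper colorings of $G$ and re-indexing using the bijection, the resulting double sum separates as a product, yielding
\[
X_G \;=\; \Bigl(\sum_{\kappa_1} \prod_{v \in V(G_1)} x_{\kappa_1(v)}\Bigr)\Bigl(\sum_{\kappa_2} \prod_{v \in V(G_2)} x_{\kappa_2(v)}\Bigr) \;=\; X_{G_1} X_{G_2},
\]
which is the claimed identity. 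There is essentially no obstacle in this argument; it is a direct unpacking of the definition. The only nontrivial ingredient is the \emph{if and only if} in the bijection above, and that is precisely where the disjoint union hypothesis is used: independent proper colorings on the two components can never create a monochromatic edge of $G$ because no cross-edge between $V(G_1)$ and $V(G_2)$ exists.
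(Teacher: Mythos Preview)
Your argument is correct and is precisely the standard proof of this multiplicativity property. Note that the paper itself does not supply a proof of this lemma; it simply quotes the result from Stanley \cite[Proposition 2.3]{Sold}, so there is no ``paper's own proof'' to compare against beyond observing that your direct-from-the-definition argument is exactly how Stanley establishes it.
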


	\begin{Lemma}\cite[Theorem 8]{CV}\label{lem:chromatic_complete}
		$X_{K_n} = n! e_n$. \\
	\end{Lemma}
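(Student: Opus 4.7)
The plan is to prove this identity directly from the definition of the chromatic symmetric function. The key observation is that in $K_n$ every pair of vertices is adjacent, so a coloring $\kappa : V \to \mathbb{Z}^+$ is proper if and only if $\kappa$ is injective. This reduces the sum defining $X_{K_n}$ to a sum over injective maps.

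First I would write out the definition applied to $K_n = (V, E)$ with $V = \{v_1, \ldots, v_n\}$, obtaining
\[
X_{K_n} = \sum_{\kappa} x_{\kappa(v_1)} \cdots x_{\kappa(v_n)},
\]
and restrict the index set to injective $\kappa$ by the observation above. Next I would reparametrize: each injective $\kappa$ is determined by the unordered set $S = \{\kappa(v_1), \ldots, \kappa(v_n)\}$ of $n$ distinct positive integers together with a bijection from $V$ to $S$. For a fixed $S$, there are exactly $n!$ such bijections, and each contributes the same monomial $\prod_{j \in S} x_j$ (since the product $x_{\kappa(v_1)} \cdots x_{\kappa(v_n)}$ does not depend on the ordering). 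Grouping terms by $S$ therefore yields
\[
X_{K_n} = n! \sum_{j_1 < j_2 < \cdots < j_n} x_{j_1} x_{j_2} \cdots x_{j_n},
\]
and the right-hand side is exactly $n! \, e_n$ by the definition of the elementary symmetric function recalled just above.

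There is no real obstacle here: the entire argument is a one-line bijective count, and the only thing to be careful about is clearly justifying that properness on $K_n$ is equivalent to injectivity of the coloring. Consequently I expect the proof to be short, with the main content being the correct bookkeeping of the $n!$ factor that arises when passing from ordered colorings to unordered color sets.
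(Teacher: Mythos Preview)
Your argument is correct: properness of a coloring of $K_n$ is exactly injectivity, and grouping injective maps by their image set immediately produces the factor $n!$ in front of $e_n$. The paper itself does not prove this lemma; it simply quotes the identity from \cite{CV}, so there is no in-paper proof to compare against. Your direct computation from the definition is the standard one-line justification and is entirely appropriate here.
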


	\begin{Lemma}\cite[Proposition 5.3]{Sold}
	\label{lem:e_path}
		$X_{P_k}$ is $e$-positive. \\
	\end{Lemma}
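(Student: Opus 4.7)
I would prove the statement by induction on $k$. The base cases are immediate: $X_{P_1}=e_1$, and since $P_2=K_2$, Lemma~\ref{lem:chromatic_complete} yields $X_{P_2}=2e_2$, both manifestly $e$-positive.

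For the inductive step, my plan is to decompose the set of proper colorings of $P_k=v_1v_2\cdots v_k$ according to a statistic that breaks the path into shorter subpaths whose chromatic symmetric functions are already known to be $e$-positive. Concretely, I would let $t$ be the least index $\geq 2$ with $\kappa(v_t)$ equal to the color of some earlier vertex; since the edge $v_{t-1}v_t$ forbids $\kappa(v_{t-1})=\kappa(v_t)$, such a repeated color must reach back to some $v_i$ with $i\leq t-2$. Grouping colorings by the value of $t$ and by the set of colors used on the ``rainbow'' prefix $v_1,\dots,v_{t-1}$ (which, summed over the choice of that unordered set of $t-1$ colors, contributes a factor proportional to $e_{t-1}$), one hopes to produce a recursion of the shape
$$X_{P_k} \;=\; \sum_{\text{decompositions}} c_\alpha\, e_\alpha\cdot\prod_i X_{P_{a_i}}$$
with each $c_\alpha$ a nonnegative integer and each $a_i<k$. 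Combined with the inductive hypothesis and with Lemma~\ref{lem:chromatic_disjoint_union} (which ensures the product of $e$-positive functions is $e$-positive), this completes the induction.

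The main obstacle will be producing a genuinely bijective decomposition so that the coefficients $c_\alpha$ are visibly nonnegative integers, rather than the signed inclusion--exclusion sum one gets by naively tracking ``which earlier vertex is matched by $v_t$'' when several candidates exist. Careful indexing of the repeated color classes as a set partition on the prefix is what makes this delicate. An alternative, more structural route I would keep in reserve is to observe that $P_k$ is a unit interval graph whose associated natural unit interval order is of the very simplest chain-type shape; for such graphs an explicit $e$-positive formula is known, and invoking it avoids the combinatorial bookkeeping at the cost of importing more machinery. I would attempt the direct decomposition first and fall back to the unit-interval viewpoint if the bookkeeping becomes unmanageable.
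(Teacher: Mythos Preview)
The paper does not give its own proof of this lemma; it simply cites Stanley's original paper \cite{Sold}, Proposition~5.3. So there is no in-paper argument to compare against, only Stanley's.

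Your primary plan has a genuine obstacle, and it is exactly the one you flag. If $t$ is the first index at which $\kappa(v_t)$ repeats an earlier color, then the monomial factor $x_{\kappa(v_t)}$ is one of the variables already appearing in the rainbow prefix $x_{\kappa(v_1)}\cdots x_{\kappa(v_{t-1})}$. Hence the contribution does \emph{not} split as $e_{t-1}$ times the chromatic symmetric function of the remaining path: the ``rest'' $v_t,\ldots,v_k$ is not colored independently of the prefix, and summing over orderings of the prefix colors does not disentangle this shared variable. Without a further idea, the recursion you write down will have signed coefficients, not visibly nonnegative ones.

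Stanley's actual argument bypasses this bijective difficulty. He first writes $X_{P_n}$ in the power-sum basis (easy, since the connected edge-subsets of a path are intervals), obtaining
\[
\sum_{n\ge 0} X_{P_n} z^n \;=\; \frac{1}{\,1-\sum_{i\ge 1}(-1)^{i-1}p_i z^i\,},
\]
and then uses the Newton identity $zE'(z)=E(z)\sum_{i\ge 1}(-1)^{i-1}p_i z^i$ to rewrite this as
\[
\sum_{n\ge 0} X_{P_n} z^n \;=\; \frac{E(z)}{\,E(z)-zE'(z)\,}\;=\;\frac{\sum_{i\ge 0} e_i z^i}{\,1-\sum_{i\ge 2}(i-1)e_i z^i\,},
\]
which is manifestly $e$-positive term by term. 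If you want a combinatorial proof in the spirit of your proposal, it is this identity you should aim to categorify, not the raw ``first repeated color'' statistic.

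Your fallback is risky: the general $e$-positivity of unit interval graphs is the Stanley--Stembridge conjecture, so invoking it for the simplest case $P_k$ is circular unless you point to a specific, independently proved formula for this particular chain-type order (which, in the end, is again Stanley's Proposition~5.3).
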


	\begin{Lemma} \cite[Proposition 5.4]{Sold}
	\label{lem:e_cycle}
		$X_{C_k}$ is $e$-positive. \\
	\end{Lemma}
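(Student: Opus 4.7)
The base case $k=3$ is immediate from Lemma \ref{lem:chromatic_complete}, since $C_3 = K_3$ yields $X_{C_3} = 6e_3$. For $k \geq 4$, my plan is to derive an explicit expansion of $X_{C_k}$ in the elementary basis. Starting from Stanley's power-sum formula
\[
X_G \;=\; \sum_{S \subseteq E(G)} (-1)^{|S|}\, p_{\lambda(V,S)},
\]
where $\lambda(V,S)$ records the component sizes of the spanning subgraph $(V,S)$, I would exploit the cyclic symmetry of $C_k$. Every subset $S$ of cycle edges with $|E \setminus S| = \ell \geq 2$ removes $\ell$ edges and produces $\ell$ paths whose vertex counts form a cyclic composition of $k$; for each partition $\mu \vdash k$ with $\ell$ parts and multiplicities $m_i$, a direct count shows that the number of such edge subsets equals $\frac{k}{\ell} \cdot \frac{\ell!}{\prod_i m_i!}$. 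The two remaining cases, $S = E(C_k)$ (the whole cycle) and the $k$ subsets of size $k-1$ (each giving a $P_k$), contribute additional $p_k$-terms. Collecting everything yields a closed form for $X_{C_k}$ in the $p$-basis.

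The second step is to convert this expression to the $e$-basis via the standard transition, or equivalently through the generating function identity $E(t) = \exp\bigl(\sum_{n \geq 1} (-1)^{n-1} \frac{p_n}{n} t^n\bigr)$, and to collect the coefficient of each $e_\lambda$. The main obstacle here is verifying nonnegativity: the $p$-to-$e$ transformation carries alternating signs, so substantial cancellation must occur before the nonnegative $e_\lambda$-coefficients emerge. I expect the cyclic symmetry of $C_k$ to force these cancellations in a structured way, ideally through a sign-reversing involution on a combinatorial model (such as decorated necklaces or colored-arc configurations on $k$ beads) whose fixed points are indexed by partitions $\lambda$ and directly exhibit a nonnegative coefficient for each $e_\lambda$.

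As a backup route, one might attempt induction on $k$. The chromatic polynomial satisfies $P(C_k, n) = P(P_k, n) - P(C_{k-1}, n)$, so if an analogous CSF identity held, it would combine with Lemma \ref{lem:e_path} and the inductive hypothesis to yield the result. However, CSF does not enjoy deletion-contraction in general, so any such induction would need a genuine symmetric-function identity (for instance a triple-deletion or modular-law-type relation) to link $X_{C_k}$ with $X_{P_k}$ and $X_{C_{k-1}}$. I therefore expect the direct computational route above to be the more efficient path to the result.
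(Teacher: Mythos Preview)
The paper itself does not prove this lemma; it is quoted from Stanley~\cite{Sold} (his Proposition~5.4) and used as a black box. So the relevant comparison is with Stanley's original argument.

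Your proposal has a genuine gap at its heart. The computation of $X_{C_k}$ in the power-sum basis via the subset-of-edges formula is correct and routine, and you are right that cyclic symmetry makes the $p$-coefficients explicit. But the entire content of the lemma is the nonnegativity of the $e$-coefficients, and on that point you offer only an expectation: you ``expect the cyclic symmetry of $C_k$ to force these cancellations,'' ``ideally through a sign-reversing involution,'' without actually constructing one. That is not a proof of $e$-positivity; it is a restatement of what must be shown. Your backup route is, as you yourself acknowledge, blocked by the failure of deletion--contraction for $X_G$, and you do not supply a substitute identity linking $X_{C_k}$, $X_{P_k}$, and $X_{C_{k-1}}$.

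Stanley's own proof avoids the $p$-to-$e$ conversion entirely. Building on his explicit $e$-expansion of $X_{P_n}$ (his Proposition~5.3), he produces an explicit formula for the $e$-expansion of $X_{C_n}$ in which every coefficient is manifestly nonnegative; no cancellation argument is needed. If you want to complete your approach, you must either construct the promised sign-reversing involution explicitly, or---more efficiently---bypass the $p$-basis and derive the $e$-expansion of $X_{C_n}$ directly, as Stanley does.
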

	
\begin{Lemma} \cite[Exercise 7.47j]{StanleyBook}
If $G$ is co-triangle-free, $G$ is e-positive.
\label{lem:cotrianglefreeepos}
\end{Lemma}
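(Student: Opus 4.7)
Plan: The assumption that $G$ is co-triangle-free is equivalent to $\alpha(G)\le 2$: every independent set of $G$, and hence every color class in any proper coloring, has at most two vertices. Set $n=|V(G)|$. I first dispose of the disconnected case: if $G$ is disconnected with $\alpha(G)\le 2$, the component independence numbers sum to at most $2$, so $G$ has exactly two components, each of which is a clique, say $K_a$ and $K_b$. Then $X_G = a!\,b!\,e_a e_b$ by Lemmas \ref{lem:chromatic_disjoint_union} and \ref{lem:chromatic_complete}, which is $e$-positive.

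For connected $G$, I would expand $X_G$ in the monomial symmetric functions by classifying proper colorings according to the matching they induce in $\bar G$: the size-two color classes form a matching $M\subseteq E(\bar G)$, and the remaining data is an assignment of $n-|M|$ distinct colors to the blocks of the induced set partition. A matching of size $k$ then contributes $k!(n-2k)!\,m_{2^k 1^{n-2k}}$, giving
\[
X_G \;=\; \sum_{k=0}^{\lfloor n/2\rfloor}\mu_k(\bar G)\,k!\,(n-2k)!\, m_{2^k 1^{n-2k}},
\]
where $\mu_k(\bar G)$ counts $k$-matchings in $\bar G$. Hence $X_G$ lies in the $(\lfloor n/2\rfloor+1)$-dimensional subspace of degree-$n$ symmetric functions spanned by $\{m_{2^k 1^{n-2k}} : 0 \le k \le \lfloor n/2\rfloor\}$. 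The same subspace is also spanned by the elementary products $\{e_{n-j} e_j : 0 \le j \le \lfloor n/2\rfloor\}$, and the transition
\[
e_{n-j} e_j \;=\; \sum_{i=0}^{j}\binom{n-2i}{j-i}\,m_{2^i 1^{n-2i}}
\]
is unitriangular, so the two families are bases of the same subspace related by an invertible integer matrix. Inverting that matrix rewrites $X_G = \sum_{j=0}^{\lfloor n/2\rfloor} c_j\,e_{(n-j,j)}$ with each coefficient $c_j$ an explicit integer linear combination of the matching numbers $\mu_k(\bar G)$.

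The main obstacle is showing $c_j \ge 0$ for every $j$. Since the unitriangular inverse has signed entries, $c_j$ is a priori a signed sum of the $\mu_k(\bar G)$, and its nonnegativity reflects a nontrivial inequality among matching counts of the triangle-free graph $\bar G$. I would attack this either by (i) a sign-reversing involution on pairs $(M, S)$ consisting of a matching $M$ of $\bar G$ together with an auxiliary combinatorial decoration $S$, in the spirit of classical identities on matching polynomials of triangle-free graphs, or (ii) a direct combinatorial interpretation of $c_j$ as the cardinality of a specific set of decorated partial matchings of $\bar G$ whose enumeration manifestly uses the absence of a triangle in $\bar G$. The crux is translating the triangle-freeness hypothesis on $\bar G$ into exactly the combinatorial identity needed to cancel the negative contributions produced by the change of basis.
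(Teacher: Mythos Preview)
The paper does not actually prove this lemma; it is stated with a bare citation to Stanley's \emph{Enumerative Combinatorics}, Exercise~7.47(j), and used as a black box. So there is no ``paper's proof'' to compare against, and your proposal must be judged on its own.

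Your reduction is correct as far as it goes. The observation that every color class has size $1$ or $2$, the monomial expansion
\[
X_G=\sum_{k}\mu_k(\bar G)\,k!\,(n-2k)!\,m_{2^k1^{n-2k}},
\]
the identity $e_{n-j}e_j=\sum_i\binom{n-2i}{j-i}m_{2^i1^{n-2i}}$, and the unitriangularity of that transition are all fine. (Your disconnected case is also correct, though unnecessary: the monomial-expansion argument does not use connectedness.)

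The gap is that you stop exactly at the point where the content lies. Writing $X_G=\sum_j c_j\,e_{n-j}e_j$ is just linear algebra; the entire statement is that $c_j\ge 0$, and you do not prove this. You offer two \emph{possible} strategies, a sign-reversing involution and a direct combinatorial interpretation, but carry out neither. In particular, you do not identify what the ``auxiliary decoration $S$'' would be, what the involution actually is, or where triangle-freeness of $\bar G$ enters; you only assert that it must. As it stands this is a plan, not a proof: the signed sums coming from the inverse transition matrix are genuinely alternating, and their nonnegativity for arbitrary triangle-free $\bar G$ is precisely the nontrivial combinatorial fact Stanley is asking you to establish. Either produce the involution (with the fixed points exhibited and the triangle-free hypothesis used explicitly), or appeal to a known closed form for the $c_j$ and verify it; without one of these, the argument is incomplete.
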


Moreover, we also employ a number of results from the graph theory literature:

	\begin{Theorem} \cite[Theorem 1.5]{Tsujie} \label{thm:P4_free}
	If $G$ is (claw, $P_4$)-free, then $X_G$ is e-positive.
\end{Theorem}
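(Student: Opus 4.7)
The plan is to induct on $n=|V(G)|$, exploiting the fact that every $P_4$-free graph is a cograph. The base case $n=1$ is immediate since $X_G=e_1$. For the inductive step, a cograph on at least two vertices is either disconnected or has disconnected complement; in the latter case $G$ is a join $G=G_1+G_2$ (every vertex of $G_1$ adjacent to every vertex of $G_2$). So the inductive step splits into a ``disjoint-union case'' and a ``join case''.

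\textbf{Disjoint-union case.} If $G=G_1\cup G_2$, each $G_i$ is an induced subgraph of $G$ and so is still (claw, $P_4$)-free. By the inductive hypothesis each $X_{G_i}$ is $e$-positive, and Lemma \ref{lem:chromatic_disjoint_union} gives $X_G=X_{G_1}X_{G_2}$; a product of $e$-positive symmetric functions is $e$-positive, so we are done here.

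\textbf{Join case.} Suppose $G=G_1+G_2$. The claw-free hypothesis forces $\alpha(G_1),\alpha(G_2)\leq 2$: if some $G_i$ contained three pairwise non-adjacent vertices $a,b,c$, then together with any vertex $v\in G_{3-i}$ (adjacent to each of $a,b,c$ by the join) we would obtain an induced $K_{1,3}$. Moreover, any independent set of a join lies entirely in one side, since a vertex of $G_1$ and a vertex of $G_2$ are always adjacent; hence $\alpha(G)=\max(\alpha(G_1),\alpha(G_2))\leq 2$. So $G$ is $3K_1$-free, i.e.\ co-triangle-free, and Lemma \ref{lem:cotrianglefreeepos} immediately yields that $X_G$ is $e$-positive. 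No appeal to induction is required in this case.

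I do not anticipate a serious obstacle once Lemma \ref{lem:cotrianglefreeepos} is available; the only observation worth isolating is the collapse in the join case, namely that claw-freeness of $G_1+G_2$ forces each side to have independence number at most $2$ and that this property is inherited by the join itself, placing $G$ squarely in the scope of the co-triangle-free lemma. If Lemma \ref{lem:cotrianglefreeepos} were not available, the join case would instead be the main obstacle and one would have to argue $e$-positivity directly for graphs with $\alpha\leq 2$, for example by unwinding the cotree into combinations of cliques and graphs of the form $K_a\cup K_b$ and proving a suitable positive join formula.
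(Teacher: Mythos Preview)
Your argument is correct. Note, however, that the paper does not supply its own proof of this theorem: it is quoted as \cite[Theorem~1.5]{Tsujie} and used as a black box. So there is no ``paper's proof'' to compare against directly.

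That said, your route is worth recording because it derives the result entirely from lemmas already assembled in the paper, without appealing to Tsujie. The cograph dichotomy (every $P_4$-free graph on at least two vertices is a disjoint union or a join) handles the structure; the disjoint-union case is dispatched by Lemma~\ref{lem:chromatic_disjoint_union} and induction; and in the join case your observation that claw-freeness forces each side to have independence number at most two, hence $\alpha(G)\le 2$, lands squarely in Lemma~\ref{lem:cotrianglefreeepos}. This last step is exactly the manoeuvre the paper itself uses in the proof of Theorem~\ref{thm:claw_copaw_free} when $\overline{G}$ is disconnected, so your argument is very much in the spirit of the surrounding proofs. In effect you have shown that Theorem~\ref{thm:P4_free} is not an external input but a corollary of Lemmas~\ref{lem:chromatic_disjoint_union} and~\ref{lem:cotrianglefreeepos} together with the cograph structure theorem.
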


	\begin{Theorem} \cite{O}\label{thm:olariu}
			If $G$ is paw-free, then every connected component of $G$ is triangle-free or is a complete multipartite graph. \\
	\end{Theorem}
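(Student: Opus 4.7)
The plan is to prove Olariu's theorem directly from the paw-free hypothesis using a local-to-global argument. Reducing to a single connected component, I may assume $G$ is connected; if $G$ has no triangle, there is nothing to prove, so fix a triangle $T=\{a,b,c\}$ in $G$. The goal is to show that $G$ is complete multipartite, which is equivalent to showing that non-adjacency (together with equality) is an equivalence relation on $V(G)$, i.e.\ that $G$ contains no induced $K_{2}\cup K_{1}$.

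The first key step is the \emph{2-neighbor lemma}: every vertex $v\notin T$ is adjacent to at least two vertices of $T$. If a vertex $v$ had exactly one neighbor in $T$, then $T\cup\{v\}$ would induce a paw, contradicting paw-freeness. To rule out a vertex with \emph{no} neighbor in $T$, I would use connectivity: take a shortest path from such a vertex to $T$; at the last edge of this path, a vertex $v_{k-1}$ with no neighbor in $T$ is joined to a vertex $v_{k}$ that, by the previous case, has at least two neighbors in $T$; those four vertices then induce a paw. So every $v\in V(G)$ has at least two neighbors in $T$.

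The second key step is transitivity of non-adjacency. Suppose for contradiction there exist distinct $u,v,w$ with $uv,vw\notin E$ but $uw\in E$. By the 2-neighbor lemma, $u$ and $w$ share a common neighbor $t\in T$, so $\{u,w,t\}$ is a triangle. Now split on whether $v$ is adjacent to $t$. If $v\sim t$, then $\{u,v,w,t\}$ induces a paw, a contradiction. If $v\not\sim t$, then $v$ must be adjacent to both of the other two triangle vertices $t_{1},t_{2}\in T\setminus\{t\}$; since $u$ has at least two neighbors in $T$ and one is $t$, $u$ is adjacent to some $t_{i}\in\{t_{1},t_{2}\}$, and then $\{u,v,t,t_{i}\}$ induces a paw (triangle $\{u,t,t_{i}\}$ with pendant $v$ attached at $t_{i}$), again a contradiction. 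Thus non-adjacency is transitive, and $G$ is complete multipartite.

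I expect the main obstacle to be keeping the case analysis in the second step clean: the danger is a combinatorial explosion of subcases for the possible subsets $N_{u}\cap T$, $N_{v}\cap T$, $N_{w}\cap T$. The observation that pulls the argument together is that any two of $u,v,w$ must share a triangle vertex (since any two 2-element subsets of a 3-element set intersect), so one only needs to locate the shared triangle vertex $t$ and then check adjacency of $v$ to $t$ versus to the remaining pair, which collapses all subcases into the two described above.
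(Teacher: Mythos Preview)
Your proof is correct. The two-neighbor lemma and the transitivity argument both go through as written; the only places to be slightly more careful are (i) in the shortest-path step, making explicit that the path is to the nearest vertex having a neighbor in $T$ (so that $v_{k-1}$ genuinely has no neighbor in $T$), and (ii) in Case~2, noting that $v\not\sim t$ forces $v\notin T$, so the two-neighbor lemma applies to $v$. Both points are easily filled in and your case split really does collapse to the two subcases you describe.

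As for comparison with the paper: there is nothing to compare against. The paper does not prove Theorem~\ref{thm:olariu}; it simply quotes it from Olariu~\cite{O} and uses it as a black box in the proofs of Theorems~\ref{thm:claw_paw_free} and~\ref{thm:claw_copaw_free}. Your argument thus supplies a self-contained proof where the paper gives none. Your route---show every vertex sees at least two corners of a fixed triangle, then use this to forbid an induced $\overline{P_3}$---is essentially the standard proof of Olariu's characterization, and it is exactly what one would expect here.
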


We will also need the following statement of the structure of (claw,triangle)-free graphs.
	\begin{Lemma}\label{lem:claw_triangle_free}
		If $G$ is (claw, triangle)-free, then each component of $G$ is a chordless path or cycle, i.e. $G=P_k$ or $G=C_k$. 
	\end{Lemma}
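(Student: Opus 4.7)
The plan is to show that every vertex of $G$ has degree at most $2$, from which the conclusion is immediate. The key observation is that the two forbidden subgraphs combine to give a very strong local restriction on neighborhoods.

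First, I would fix an arbitrary vertex $v \in V(G)$ and examine its neighborhood $N(v)$. Since $G$ is triangle-free, no two vertices in $N(v)$ can be adjacent; otherwise $v$ together with those two neighbors would form a $K_3$. Hence $N(v)$ is a stable set.

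Next, I would use claw-freeness to bound $|N(v)|$. Since $N(v)$ is already a stable set, any three distinct vertices $u_1, u_2, u_3 \in N(v)$ together with $v$ would induce a $K_{1,3}$ centered at $v$. Because $G$ is claw-free, this forces $|N(v)| \le 2$, so $\deg(v) \le 2$. As $v$ was arbitrary, $G$ has maximum degree at most $2$.

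Finally, a graph of maximum degree at most $2$ is well known to be a disjoint union of paths and cycles (each connected component is either a finite path, a finite cycle, or an infinite path, but $G$ is finite by hypothesis). Moreover, any chord in such a path or cycle would create a vertex of degree $3$, which is impossible; thus each path or cycle is chordless, i.e., induced. No step here presents a real obstacle — the entire argument is a direct consequence of the local degree bound forced by simultaneously forbidding the claw and the triangle.
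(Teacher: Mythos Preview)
Your argument is correct and is in fact cleaner than the paper's. The paper proceeds by case analysis on a component $T$: if $T$ contains a chordless cycle $C$ (necessarily of length $\ge 4$ by triangle-freeness), it argues that any vertex of $T$ outside $C$ with a neighbor $c$ on $C$ either creates a triangle (if it hits a cycle-neighbor of $c$) or makes $c$ the center of a claw; hence $T=C$. If $T$ has no cycle it is a tree, and a claw-free tree has maximum degree at most $2$, so $T$ is a path. Your route bypasses this split entirely by observing directly that triangle-freeness forces $N(v)$ to be stable and claw-freeness then caps $|N(v)|$ at $2$, giving $\Delta(G)\le 2$ in one stroke. What you gain is brevity and a uniform treatment of both outcomes; what the paper's argument offers is perhaps a slightly more explicit picture of how an extra vertex attached to a cycle produces the forbidden subgraph, but this is not needed for the lemma as stated.
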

{\it Proof of Lemma~\ref{lem:claw_triangle_free}}. Let $T$ be a component of $G$. Suppose $T$ contains a chordless cycle $C$. Since $G$ is triangle-free, we may assume $C$ has at least four vertices. If every vertex of $T$ lies on $C$, then we are done. So we may assume there is a vertex $v \in T -C$ that has a neighbor $c$ in $C$. If $v$ is adjacent to a neighbor $c'$ of $c$ on $C$, then $T$ contains a triangle with vertices $v, c, c'$; otherwise $c$ is the center of a claw in $T$. Now, we may assume $T$ contains no cycle, that is, $T$ is a tree, but a claw-free tree has maximum degree at most two and so is necessarily a chordless path. $\Box$

\section{$H=$\{claw, $F$\} }
\label{sec:clawF}

	We are now ready to show which ({\em claw}, $F$)-free graphs $G$,  for $F$ a single four-vertex graph,  can guarantee that $X_G$ is $e$-positive; however, we begin with a negative result that will eliminate a number of cases:

\begin{Lemma}
A graph that is $H$-free for $H$ equal to  \{claw, diamond\}, \{claw, $K_4$\}, \{claw, $4K_1$\}, \{claw, $C_4$\}, \{claw, $2K_2$\}, or \{claw, co-claw\}, is not necessarily e-positive.
\label{thm:somethm}
\end{Lemma}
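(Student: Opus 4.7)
The strategy is to exhibit the single graph $G$ of Figure~\ref{fig:ThreeSun} (Stanley's example) and argue that it serves as a counterexample for all six choices of $H$ simultaneously. Since $G$ is claw-free, it suffices to verify that $X_G$ is not $e$-positive and that $G$ contains none of diamond, $K_4$, $4K_1$, $C_4$, $2K_2$, or co-claw as an induced subgraph; then for each $H=$\{claw, $F$\} appearing in the statement, $G$ is $H$-free but not $e$-positive, and the six cases are handled at once.

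For the non-$e$-positivity, I would compute $X_G$ explicitly in the elementary basis. The standard route is to start from the power-sum expansion $X_G=\sum_{S\subseteq E(G)}(-1)^{|S|}p_{\lambda(S)}$, apply the classical transition matrix from $p_\mu$ to $e_\lambda$, and read off the resulting coefficients; a single negative coefficient suffices. Stanley's original paper carries out this computation for exactly this six-vertex graph, so in practice one would simply quote the $e$-expansion and indicate the offending negative coefficient.

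For the induced-subgraph exclusions I would work directly from a labeled vertex set of $G$. Each of the six checks is a short case analysis: for the claw one inspects every neighborhood and observes that no vertex has three pairwise non-adjacent neighbors; for $K_4$ and diamond one bounds the number of edges among any four vertices to rule out $6$ and $5$ edges respectively; for $4K_1$ and co-claw one uses the triangle structure of $G$ to rule out, respectively, a $4$-vertex independent set and a triangle together with a vertex isolated from it; and for $C_4$ and $2K_2$ one enumerates the few $4$-vertex subsets that involve enough vertices of degree at least two. Once the picture of $G$ is fixed, each check becomes a one- or two-sentence verification.

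The main obstacle is really the bookkeeping for the $e$-expansion of $X_G$, which is mechanical but tedious; the induced-subgraph checks are numerous but each is short, with the $C_4$ and $2K_2$ exclusions requiring the most care, since in each such case one must be explicit about which $4$-vertex subset actually induces the forbidden configuration rather than some proper supergraph of it.
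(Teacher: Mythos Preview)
Your approach is correct and is exactly what the paper does: it exhibits the $3$-sun of Figure~\ref{fig:ThreeSun}, quotes Stanley's $e$-expansion $X_G = 6e_{3,2,1} - 6e_{3,3} + 6e_{4,1,1} + 12e_{4,2} + 18e_{5,1} + 12e_{6}$ to show $X_G$ is not $e$-positive, and observes that $G$ contains none of the six forbidden subgraphs. The paper does not spell out the individual induced-subgraph checks you outline, but your plan matches theirs in every essential respect.
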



	\begin{proof}
		Consider the graph $G$ which is the $3$-sun graph  (Figure \ref{fig:ThreeSun}).
		Note that $G$ is $(claw, K_4)$-free, $(claw, diamond)$-free, $(claw, C_4)$-free, $(claw, 4K_1)$-free, $(claw, 2K_2)$-free, and {\em (claw, co-claw)}-free; however, recall from Stanley \cite{Sold} that the chromatic symmetric function for the $3$-sun is
		\[ X_G = 6e_{3,2,1} - 6e_{3,3} + 6e_{4,1,1} + 12e_{4,2} + 18e_{5,1} + 12e_{6}, \]
		implying $X_G$ is not e-positive.
	\end{proof}

		\begin{figure}
\begin{center}
\begin{tikzpicture} [scale = 1.25]
\tikzstyle{every node}=[font=\small]

\newcommand{\size}{1}

\newcommand{\threesun}{1}{
    \path (0, 0) coordinate (g1);
    \path (g1) +(-1.5 * \size, 0) node (g1_1){};
    \path (g1) +(-0.5 * \size, 0) node (g1_2){};
    \path (g1) +(0.5, 0) node (g1_3){};
    \path (g1) +(1.5 * \size, 0) node (g1_4){};
    \path (g1) +(0, 0.866 * \size) node (g1_5){};
    \path (g1) +(0, 1.866* \size) node (g1_6){};
    \foreach \Point in {(g1_1), (g1_2), (g1_3), (g1_4), (g1_5), (g1_6)}{
        \node at \Point{\textbullet};
    }
    \draw   (g1_1) -- (g1_2)
            (g1_2) -- (g1_3)
            (g1_3) -- (g1_4)
            (g1_2) -- (g1_5)
            (g1_3) -- (g1_5)
            (g1_5) -- (g1_6);
    \path (g1) ++(0,-\size / 2) node[draw=none,fill=none] { {\large $3-sun$}};
}

\end{tikzpicture}
\end{center}
\caption{Example from Stanley \cite{Sold} of a graph that is not $e$-positive.} \label{fig:ThreeSun}

\end{figure}
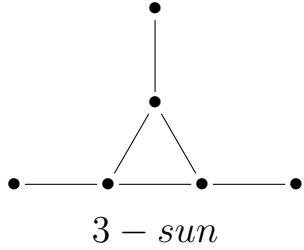

	\begin{Theorem}\label{thm:claw_paw_free}
		If $G$ is $(claw, paw)$-free, then $X_G$ is $e$-positive.
			\end{Theorem}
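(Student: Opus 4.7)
The plan is to reduce to a single connected component using multiplicativity of the chromatic symmetric function under disjoint union (Lemma \ref{lem:chromatic_disjoint_union}) and then invoke Olariu's structure theorem (Theorem \ref{thm:olariu}), which tells us that each such component is either triangle-free or a complete multipartite graph. Since the product of $e$-positive symmetric functions is again $e$-positive, it suffices to handle these two cases separately.

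In the triangle-free case, the component is also claw-free, so Lemma \ref{lem:claw_triangle_free} identifies it as a chordless path or a chordless cycle, and then Lemmas \ref{lem:e_path} and \ref{lem:e_cycle} give $e$-positivity directly.

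The substantive step is the complete multipartite case. Writing the component as $K_{n_1, \ldots, n_k}$, I would first argue that claw-freeness forces each part size $n_i$ to be at most $2$ (or else $k=1$ and the component is a single vertex by connectedness). Indeed, if some part contains three vertices $a,b,c$ and some other part contains a vertex $v$, then $v$ is adjacent to each of $a,b,c$ while $\{a,b,c\}$ is independent, yielding an induced claw centered at $v$. Once every part has size at most $2$, every independent set of the component lies inside a single part, so $\alpha \le 2$. This makes the component co-triangle-free, and Lemma \ref{lem:cotrianglefreeepos} then yields $e$-positivity without any direct manipulation of $X_{K_{n_1, \ldots, n_k}}$.

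I do not anticipate a serious technical obstacle. The most delicate point is simply the case split, and within it the real leverage is the observation that any connected claw-free complete multipartite graph has independence number at most $2$; this lets Stanley's co-triangle-free exercise (Lemma \ref{lem:cotrianglefreeepos}) do the heavy lifting and subsume what would otherwise be a nontrivial explicit calculation (for example, for the cocktail-party graphs $K_{2,2,\ldots,2}$ and their mixed-size cousins $K_{2,\ldots,2,1,\ldots,1}$).
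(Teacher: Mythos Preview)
Your proposal is correct and follows essentially the same argument as the paper's proof: reduce to connected components via Lemma~\ref{lem:chromatic_disjoint_union}, apply Olariu's theorem, handle the triangle-free case via Lemma~\ref{lem:claw_triangle_free} together with Lemmas~\ref{lem:e_path} and~\ref{lem:e_cycle}, and in the complete multipartite case observe that claw-freeness forces $\alpha\le 2$ and invoke Lemma~\ref{lem:cotrianglefreeepos}. The only cosmetic difference is that the paper phrases the reduction to components as an induction on $|V|$, whereas you invoke multiplicativity directly; your version is slightly cleaner but the substance is identical.
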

	\begin{proof}
		We prove by induction on the number of vertices.  Let $G$ be $(claw, paw)$-free. If $G$ is disconnected, then by the induction hypothesis, each component of $G$ is $e$-positive and so by Lemma~\ref{lem:chromatic_disjoint_union}, $G$ is $e$-positive. So we may assume $G$ is connected. By Theorem~\ref{thm:olariu} (Olariu's theorem),  $G$ is triangle-free or is a complete multipartite graph. By Lemma \ref{lem:claw_triangle_free}, $G$ is either $P_k$ or $C_k$, or $G$ is a complete multipartite graph.

		If $G$ is $P_k$ or $C_k$, then by Lemma~\ref{lem:e_path} or Lemma~\ref{lem:e_cycle}, $G$ is $e$-positive. If $G$ is a complete multipartite graph, since $G$ is also claw-free we have $\alpha(G) \le 2$.  It then follows from Lemma~\ref{lem:cotrianglefreeepos} that $G$ is $e$-positive.
		Therefore, $(claw, paw)$-free graphs are $e$-positive.
	\end{proof}


	\begin{Theorem}\label{thm:claw_copaw_free}
		If $G$ is  (claw, co-paw)-free, then $X_G$ is $e$-positive.
	\end{Theorem}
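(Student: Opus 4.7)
The plan is to show that every connected (claw, co-paw)-free graph $G$ satisfies $\alpha(G)\le 2$ and is therefore e-positive by Lemma~\ref{lem:cotrianglefreeepos}. I would argue by induction on $|V(G)|$. If $G$ is disconnected, each component is (claw, co-paw)-free, hence e-positive by the induction hypothesis, and Lemma~\ref{lem:chromatic_disjoint_union} concludes the argument. So the substantive case is when $G$ is connected; I would then suppose for contradiction that there is an independent triple $\{x,y,z\}$ in $G$.

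The first step is a small claim: no two vertices of $\{x,y,z\}$ share a common neighbour. Indeed, if $w$ were a common neighbour of, say, $x$ and $y$, then $x$-$w$-$y$ is an induced $P_3$; since $z$ is non-adjacent to both $x$ and $y$, it is distinct from $x,w,y$, so co-paw-freeness applied to this $P_3$ together with $z$ forces $z$ to be adjacent to one of $x,w,y$, hence to $w$. But then $w$ is the centre of a claw with leaves $x,y,z$, contradicting claw-freeness.

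Next I would pick a shortest $xy$-path $x=p_0,p_1,\ldots,p_k=y$. Non-adjacency of $x,y$ gives $k\ge 2$, and the common-neighbour step rules out $k=2$, so $k\ge 3$. Applying co-paw-freeness to the induced $P_3$ $p_0p_1p_2$ together with the vertex $y=p_k$ (which lies outside this triple as $k\ge 3$), the only possibility is that $y$ is adjacent to $p_2$, since non-adjacency to $p_0=x$ is given and adjacency to $p_1$ would shortcut the path. This forces $k=3$. Now apply co-paw-freeness to the same $P_3$ together with $z$: the vertex $z$ is distinct from $p_0,p_1,p_2$, since coinciding with $p_1$ or $p_2$ would make it a neighbour of $x$ or $y$ respectively; hence $z$ is adjacent to $p_1$ or $p_2$, producing a common neighbour for $\{x,z\}$ or for $\{y,z\}$, contradicting the first step. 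Thus $\alpha(G)\le 2$, and Lemma~\ref{lem:cotrianglefreeepos} finishes the proof.

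I expect the main obstacle to be the bookkeeping that pins down where the third independent vertex $z$ can sit relative to the $xy$-path — in particular, verifying $z \notin \{p_0,p_1,p_2\}$ before invoking co-paw-freeness on it, and ensuring that the shortest-path property legitimately rules out the adjacencies needed to conclude $k=3$. Everything else is a direct alternation between the two forbidden induced subgraphs, with no structural dichotomy (unlike the paw case, which needs Olariu's theorem).
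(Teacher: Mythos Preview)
Your proof is correct. The shortest-path argument cleanly forces $\alpha(G)\le 2$ in the connected case, and the bookkeeping you flag as the potential obstacle (that $z\notin\{p_0,p_1,p_2\}$, and that adjacency of $y$ to $p_1$ would contradict minimality) is all handled properly.

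Your route, however, is genuinely different from the paper's. The paper dualises: since $G$ is co-paw-free, $\overline{G}$ is paw-free, and the case split is on whether $\overline{G}$ (not $G$) is connected. When $\overline{G}$ is disconnected, $G$ is a join and claw-freeness immediately bounds $\alpha$; when $\overline{G}$ is connected, Olariu's theorem (Theorem~\ref{thm:olariu}) is applied to $\overline{G}$ to obtain that $\overline{G}$ is triangle-free (so $\alpha(G)\le 2$) or complete multipartite (so $G$ is a disjoint union of cliques, handled by Lemmas~\ref{lem:chromatic_disjoint_union} and~\ref{lem:chromatic_complete}). Your argument avoids Olariu entirely and works directly in $G$ via the $P_3$-plus-vertex configurations, which makes it more self-contained; the paper's version, on the other hand, makes the parallelism with the (claw, paw) case explicit by passing to the complement. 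Both approaches ultimately reduce the connected case to Lemma~\ref{lem:cotrianglefreeepos}, but yours establishes the stronger structural fact that \emph{every} connected (claw, co-paw)-free graph already has $\alpha\le 2$, whereas the paper allows one extra family (disjoint unions of cliques) to survive to the end because its case split is on $\overline{G}$ rather than $G$.
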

	\begin{proof}
			We prove by induction on the number of vertices. Let $G$ be ({\em claw, co-paw})-free.  
			If $\overline{G}$ is disconnected, then $G$ is the join of two vertex sets $V_A$ and $V_B$. Since $G$ is claw-free, we have $\alpha(V_A) \le 2$ and $\alpha(V_B) \le 2$, and thus $\alpha(G) \le 2$, and so $G$ is $e$-positive by Lemma~\ref{lem:cotrianglefreeepos}. Thus, we may assume $\overline{G}$ is connected. 
			
			 By Theorem~\ref{thm:olariu}, $G$ is the graph  $\overline{P_k}$ or $\overline{C_k}$, or $G$ is the complement of a complete bipartite graph.  If $G$ is $\overline{P_k}$ or $\overline{C_k}$, then $\alpha(G) \le 2$, which implies $G$ is $e$-positive by Lemma~\ref{lem:cotrianglefreeepos}. If $G$ is the complement of a complete bipartite graph, then $G$ is a disjoint union of cliques. So, $G$ is $e$-positive by Lemma~\ref{lem:chromatic_disjoint_union}  and Lemma~\ref{lem:chromatic_complete}.

	\end{proof}

The last four-vertex graph $F$ to consider is the co-diamond.  However, there is no straightforward decomposition that allows us to handle this case.  To this end,  we will make some observations about the structure of {\em (claw, co-diamond)}-free graphs in the next section.
\section{The structure of (claw, co-diamond)-free graphs}

In this section, we study the structure of (claw, co-diamond)-free graphs. 

\begin{Lemma}\label{lem:disconnected-co-diamond-free}
	Let $G$ be a co-diamond-free graph that is disconnected. Then 
	\begin{itemize}
		\item Each component of $G$ is a clique, or 
		\item $G$ is the union of a complete multipartite graph and the graph $K_1$.
	\end{itemize}
\end{Lemma}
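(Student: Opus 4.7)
Recall that the co-diamond consists of a single edge together with two isolated vertices, so the forbidden pattern is: an edge $xy$ and two further vertices, each non-adjacent to both $x$ and $y$ and non-adjacent to each other. The plan is to split on whether $G$ contains any edge, and, if so, to exploit the fact that an edge in one component together with two mutually non-adjacent vertices elsewhere is already a forbidden induced co-diamond.

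If $G$ has no edges, every component is $K_1$, hence a clique, and the first alternative holds. Otherwise fix an edge $xy$ in some component $C$. For any two vertices $a, b \in V(G) \setminus C$, both are non-adjacent to $x$ and to $y$, so if $a \not\sim b$ then $\{x,y,a,b\}$ is an induced co-diamond. Hence any two vertices outside $C$ are adjacent; since vertices in distinct components are non-adjacent, all of $V(G) \setminus C$ lies in one further component $C'$ which is a clique. Thus $G$ has exactly two components. If $|C'| \ge 2$ then $C'$ contains an edge, and the symmetric argument forces $C$ to be a clique as well, so both components are cliques and we land in the first alternative.

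The remaining, and main, case is $|C'| = 1$, say $C' = \{w\}$; here I need to show $C$ is complete multipartite. The plan is to use $w$ as a universal isolated witness: for every edge $uv \in E(C)$ and every third vertex $z \in V(C)$, the quadruple $\{u,v,z,w\}$ already contains the edge $uv$ and has $w$ non-adjacent to $u,v,z$, so co-diamond-freeness of $G$ forces $z$ to be adjacent to $u$ or to $v$. Equivalently, $\overline{C}$ contains no induced $P_3$, so $\overline{C}$ is a disjoint union of cliques and $C$ is complete multipartite. The main (mild) obstacle is noticing this $P_3$-in-the-complement reformulation; everything else is bookkeeping on cases.
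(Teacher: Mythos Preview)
Your proof is correct and essentially matches the paper's: both exploit that an edge plus two mutually non-adjacent vertices outside its component is an induced co-diamond, and both finish via the co-$P_3$-free (equivalently, $\overline{C}$ is $P_3$-free) characterization of complete multipartite graphs. The only difference is organizational---the paper begins by assuming some component is not a clique and works with an induced $P_3$ inside it, whereas you begin from any edge and split on $|C'|$---but the underlying ideas are identical.
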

\begin{proof}
	Let $G$ be a co-diamond-free graph that is disconnected. We may assume some component $C$ of $G$ is not a clique, for otherwise we are done. So $C$ contains a $P_3$ as induced subgraph. If $G$ has at least three components, then $G-C$ contains a $2K_1$ and this $2K_1$ together with an edge of $C$ induce a co-diamond, a contradiction. So $G$ has exactly two components. Let $C'$ be the component different from $C$. If $C'$ contains an edge, then this edge and the two end-points of the $P_3$ in $C$ from a co-diamond. So $C'$ is the graph $K_1$. The component $C$ cannot contain a co-$P_3$ as induced subgraph, for otherwise this co-$P_3$ and $C'$ form a co-diamond. Since a connected graph without co-$P_3$ must be a complete multipartite graph, the Lemma follows.  
\end{proof}
\begin{Lemma}\label{lem:disconnected-e-positive}
	If a (claw, co-diamond)-free graph $G$ is disconnected, then $G$ is $e$-positive.
\end{Lemma}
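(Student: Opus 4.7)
The plan is to invoke Lemma~\ref{lem:disconnected-co-diamond-free} to split into the two structural cases it provides, and handle each case by pushing $e$-positivity through the disjoint-union factorization given by Lemma~\ref{lem:chromatic_disjoint_union}.

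First, suppose every component of $G$ is a clique. Then by iterating Lemma~\ref{lem:chromatic_disjoint_union} and applying Lemma~\ref{lem:chromatic_complete}, $X_G$ is a product of terms of the form $n_i!\, e_{n_i}$, which is manifestly $e$-positive.

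The substantive case is the second: $G = H \cup K_1$ where $H$ is a connected complete multipartite graph. Here the disjoint-union identity gives $X_G = X_H \cdot X_{K_1} = X_H \cdot e_1$, so once $X_H$ is known to be $e$-positive, multiplying by $e_1$ preserves $e$-positivity (each $e_\lambda$ becomes $e_{\lambda \cup (1)}$). Thus it suffices to show $X_H$ is $e$-positive. The key observation is that, since $G$ is claw-free, $H$ must be claw-free. I would then argue that a complete multipartite claw-free graph on at least two parts has all parts of size at most $2$: indeed, if $H = K_{n_1, \ldots, n_k}$ with some $n_j \geq 3$, then any vertex in a different part together with three vertices of $V_j$ induces a claw. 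Consequently every independent set of $H$, being confined to a single part, has size at most $2$, so $H$ is co-triangle-free and Lemma~\ref{lem:cotrianglefreeepos} supplies the $e$-positivity of $X_H$.

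Note the mild subtlety that $G$ itself need not be co-triangle-free, since the isolated vertex of $K_1$ together with a part of $H$ of size $2$ produces an independent set of size $3$; the point is that Lemma~\ref{lem:cotrianglefreeepos} only has to be applied to $H$, not to $G$, and then the extra $K_1$ factor is absorbed painlessly through Lemma~\ref{lem:chromatic_disjoint_union}. The only conceptual step is the short claw-freeness argument that forces $\alpha(H) \leq 2$; everything else is a direct assembly of the lemmas already available.
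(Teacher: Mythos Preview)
Your proof is correct and follows essentially the same approach as the paper: invoke Lemma~\ref{lem:disconnected-co-diamond-free}, dispatch the all-cliques case via Lemmas~\ref{lem:chromatic_disjoint_union} and~\ref{lem:chromatic_complete}, and in the multipartite-plus-$K_1$ case use claw-freeness to force $\alpha \le 2$ on the multipartite piece so that Lemma~\ref{lem:cotrianglefreeepos} applies, then recombine via Lemma~\ref{lem:chromatic_disjoint_union}. Your write-up simply spells out the $\alpha \le 2$ step and the role of the extra $K_1$ factor in more detail than the paper does.
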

\begin{proof}
	Let $G$ be a (claw, co-diamond)-free graph that is disconnected. We will rely on Lemma~\ref{lem:disconnected-co-diamond-free}. If each component of $G$ is a clique, the $G$ is $e$-positive by Lemma~\ref{lem:chromatic_disjoint_union} and Lemma~\ref{lem:chromatic_complete}. So we may assume $G$ is the union of a complete multipartite graph $M$ and the graph $K_1$. Since $G$ is claw-free, we have $\alpha(M) \leq 2$, and so $M$ is $e$-positive by Lemma~\ref{lem:cotrianglefreeepos}. It follows from Lemma~\ref{lem:chromatic_disjoint_union} that $G$ is $e$-positive.
\end{proof}
\begin{Lemma}\label{lem:big-alpha}
	If a (claw, co-diamond)-free graph $G$ has $\alpha(G) \geq 4$, then $G$ is $e$-positive.
\end{Lemma}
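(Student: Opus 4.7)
My plan is to show that the existence of a stable set of size four forces $G$ to be disconnected, and then invoke Lemma~\ref{lem:disconnected-e-positive}. Concretely, I will fix a stable set $S = \{v_1, v_2, v_3, v_4\}$ in $G$ and argue that each $v_i$ is an isolated vertex.

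The main step runs as follows. Suppose, for contradiction, that some $v_i \in S$ has a neighbor $v$; since $S$ is stable we automatically have $v \notin S$. Without loss of generality take $i = 1$. For each pair $\{v_j, v_k\} \subseteq \{v_2, v_3, v_4\}$, the four vertices $\{v_1, v, v_j, v_k\}$ induce a subgraph containing the edge $v_1 v$, with $v_j, v_k$ both non-adjacent to $v_1$ and to each other. To avoid inducing a co-diamond on these four vertices, $v$ must be adjacent to at least one of $v_j, v_k$. Running this observation over the three pairs inside $\{v_2, v_3, v_4\}$ forces $v$ to be adjacent to at least two vertices of $\{v_2, v_3, v_4\}$; together with the edge $v v_1$, the vertex $v$ then has three pairwise non-adjacent neighbors in $S$, yielding a claw centered at $v$. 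This contradicts the claw-freeness of $G$.

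Hence no $v_i \in S$ can have a neighbor outside $S$, and since $S$ is stable no $v_i$ has a neighbor inside $S$ either. So each $v_i$ is an isolated vertex of $G$, in particular $G$ is disconnected, and Lemma~\ref{lem:disconnected-e-positive} immediately yields the $e$-positivity of $X_G$.

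I do not expect a serious obstacle here; the proof is essentially a one-shot clash between the two forbidden subgraphs. Co-diamond-freeness prevents an outside neighbor of $v_1$ from being non-adjacent to two of $v_2, v_3, v_4$, which is exactly the configuration that claw-freeness would otherwise demand. The only thing to watch is the lower bound $\alpha(G) \geq 4$: with only three vertices left after picking $v_1$ the pigeonhole argument on pairs produces the third claw-vertex, and any weaker hypothesis would leave one of the two constraints without enough room to bite.
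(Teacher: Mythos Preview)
Your argument is correct and rests on the same claw/co-diamond clash as the paper's proof: a vertex with a neighbor in the stable $4$-set would need at least two further neighbors there (co-diamond-freeness), producing a claw. The only packaging difference is that the paper takes a \emph{maximal} stable set $S$ and concludes $G=S$ is itself a stable set (hence trivially $e$-positive), whereas you stop once the four $v_i$ are isolated and hand off to Lemma~\ref{lem:disconnected-e-positive}; both routes are valid and essentially equivalent.
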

\begin{proof}
	We use a result of Lozin and Malyshev (Lemma 9 in \cite{LM}) that shows a (claw, co-diamond)-free graph $G$ with $\alpha(G) \geq 4$ must be a stable set. For the sake of completeness, we are going to provide a proof of this statement. Let $G$ be a (claw, co-diamond)-free graph $G$ with $\alpha(G) \geq 4$. Consider a stable set $S$ that has at least four vertices and is maximal (that is, not belonging to a larger stable set). Consider any vertex $v \in G - S$, if such a vertex exists. Vertex $v$ must have a neighbor in $S$ by the maximality of $S$. If $v$ has at least three neighbors in $S$, then $G$ contains a claw, otherwise $G$ contains a co-diamond. So $G$ must be $S$. Clearly, a stable set is $e$-positive. 
\end{proof}	
	So we only need to consider (claw, co-diamond)-free graphs $G$ that are connected and have $\alpha(G) \leq 3$ ($4K_1$-free).
	If $G$ is triangle-free, then by Lemma~\ref{lem:claw_triangle_free}, $G$ is $e$-positive. Similarly, if $G$ is co-triangle-free, then $G$ is $e$-positive. So, we have $\alpha(G) = 3$. Suppose $G$ contains a triangle and co-triangle. Let us name the vertices of the co-triangle $a,b,c$. Since $G$ is co-diamond-free, any vertex not in the co-triangle must be joined to two vertices in the co-triangle (if it is joined to none we get a $4K_1$, if it is joined to one then we get a co-diamond, and if it is joined to three then we get a claw). Let the set $S_{x,y}$ be the vertices adjacent to the two vertices $x,y$ of the co-triangle. This means that ({\em claw, co-diamond})-free graphs which are not known to be $e$-positive from the previous theorems have the structure depicted in Figure \ref{fig:claw-codiamond-free} (we will call such graphs {\it peculiar}).
	

	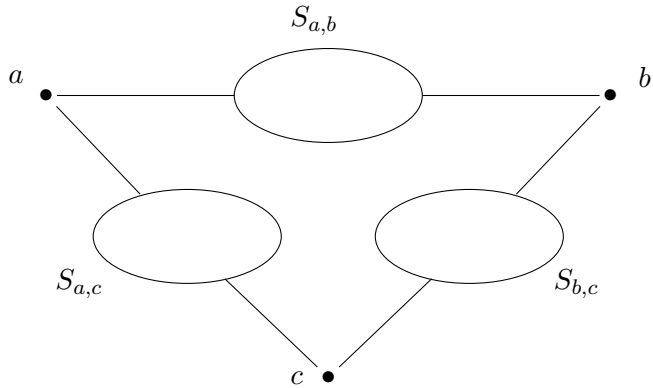
\begin{figure}[h]
		\begin{center}
\begin{tikzpicture} [scale = 1.25]
\tikzstyle{every node}=[font=\small]

\newcommand{\size}{1}

\newcommand{\ClawCodiamondStructure}{1}{
    \path (0, 0) coordinate (g1);
    \path (g1) +(-3 * \size, 0) node (g1_1){}; 
    \path (g1) +(0, -3 * \size) node (g1_2){}; 
    \path (g1) +(3 * \size, 0) node (g1_3){}; 
    \path (g1) +(-2 * \size, -1.05 * \size) node (g1_4){}; 
    \path (g1) +(2 * \size, -1.05 * \size) node (g1_5){}; 
    \path (g1) +(-1 * \size, 0) node (g1_6){}; 
    \path (g1) +(-1.1 * \size, -1.95 * \size) node (g1_7){}; 
    \path (g1) +(1.1 * \size, -1.95 * \size) node (g1_8){}; 
    \path (g1) +(1 * \size, 0) node (g1_9){}; 
    \foreach \Point in {(g1_1), (g1_2), (g1_3)}{
        \node at \Point{\textbullet};
    }
    \draw (-1.5 * \size, -1.5 * \size) ellipse (1cm and 0.5cm);
    \draw (1.5 * \size, -1.5 * \size) ellipse (1cm and 0.5cm);
    \draw (0, 0) ellipse (1cm and 0.5cm);
    \draw   (g1_1) -- (g1_4.center)
            (g1_1) -- (g1_6.center)
            (g1_2) -- (g1_7.center)
            (g1_2) -- (g1_8.center)
            (g1_3) -- (g1_5.center)
            (g1_3) -- (g1_9.center);

    \node[text width=1cm] at (-3,0.2) {$a$};
    \node[text width=1cm] at (3.7,0.2) {$b$};
    \node[text width=1cm] at (0,-3) {$c$};
    \node[text width=1cm] at (0,0.8) {$S_{a,b}$};
    \node[text width=1cm] at (-2.5,-2) {$S_{a,c}$};
    \node[text width=1cm] at (2.8,-2) {$S_{b,c}$};
}

\end{tikzpicture}
\end{center}
		\caption{The structure of a connected ({\em claw, co-diamond})-free graph that is not known to be $e$-positive.  The three  black vertices are the co-triangle. Each oval represents a subgraph, with each vertex in subgraph being joined to the two corresponding vertices of the co-triangle. At least two ovals are non-empty.  }
		\label{fig:claw-codiamond-free}
	\end{figure}


		Now that we understand  the structure of ({\em claw, co-diamond})-free graphs, we can consider adding additional restrictions to the graph.  In the following theorems we explore the e-positivity question for {\em (claw, co-diamond, $F$})-free graphs for $F$ a four-vertex graph.

		\begin{Theorem}
			If $G$ is (claw, co-diamond, $P_4$)-free, (claw, co-diamond, paw)-free, or (claw, co-diamond, co-paw)-free, then $X_G$ is $e$-positive.
		\end{Theorem}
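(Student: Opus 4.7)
The plan is to observe that each of the three hypothesized classes is a subclass of a class whose $e$-positivity has already been established earlier in the paper, so no new structural analysis is required. Specifically, a $(\text{claw}, \text{co-diamond}, P_4)$-free graph is in particular $(\text{claw}, P_4)$-free; a $(\text{claw}, \text{co-diamond}, \text{paw})$-free graph is in particular $(\text{claw}, \text{paw})$-free; and a $(\text{claw}, \text{co-diamond}, \text{co-paw})$-free graph is in particular $(\text{claw}, \text{co-paw})$-free. Since the $e$-positivity of a class of graphs is inherited by any subclass (the property is a condition on individual graphs, not on the class), it suffices to cite the relevant prior result in each case.

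Concretely, I would write the proof as three one-line invocations. For the first case, apply Theorem~\ref{thm:P4_free} (Tsujie) directly. For the second case, apply Theorem~\ref{thm:claw_paw_free}. For the third case, apply Theorem~\ref{thm:claw_copaw_free}. In each case the conclusion that $X_G$ is $e$-positive is immediate.

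There is essentially no obstacle here; the only point worth flagging is that although the preceding section develops the ``peculiar'' structure of $(\text{claw}, \text{co-diamond})$-free graphs, that structural machinery is not needed for the present theorem, because the additional forbidden subgraph in each case already collapses the class into one that was handled without reference to co-diamond-freeness. The structural description from Figure~\ref{fig:claw-codiamond-free} will become relevant only when the third forbidden subgraph is not among $\{P_4, \text{paw}, \text{co-paw}\}$, such as in the diamond/co-diamond case treated separately as Theorem~\ref{thm:clawdiamondcodiamond}. Thus the present theorem serves mainly to record, for the reader's convenience, that three of the four-vertex completions of $\{\text{claw}, \text{co-diamond}\}$ offer no new difficulty.
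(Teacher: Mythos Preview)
Your proposal is correct and matches the paper's own proof exactly: the paper simply writes ``This follows directly from Theorems~\ref{thm:P4_free}, \ref{thm:claw_paw_free}, and \ref{thm:claw_copaw_free}.'' Your additional remark that the peculiar-graph structure is unnecessary here is accurate and a helpful clarification, though the paper does not make it explicit.
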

		\begin{proof}
		This follows directly from Theorems \ref{thm:P4_free}, \ref{thm:claw_paw_free}, and \ref{thm:claw_copaw_free}.
		\end{proof}

		\begin{Theorem}\label{claw-diamond_co-diamond}
			If $G$ is  (claw, co-diamond, diamond)-free or (claw, co-diamond, co-claw)-free, then $X_G$ is $e$-positive.
		\end{Theorem}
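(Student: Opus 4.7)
The plan is to reduce to the peculiar structure developed in this section and then use the additional forbidden subgraph (diamond or co-claw) to bound $|V(G)|$, at which point only finitely many graphs remain and each can be checked directly. By Lemma~\ref{lem:disconnected-e-positive} and Lemma~\ref{lem:big-alpha} we may assume $G$ is connected with $\alpha(G) \leq 3$. If $\alpha(G) \leq 2$ then $G$ is co-triangle-free and e-positive by Lemma~\ref{lem:cotrianglefreeepos}, and if $G$ is triangle-free then Lemma~\ref{lem:claw_triangle_free} makes $G$ a path or cycle, hence e-positive by Lemmas~\ref{lem:e_path} and~\ref{lem:e_cycle}. So we may assume $G$ has the peculiar structure of Figure~\ref{fig:claw-codiamond-free}, with a co-triangle $\{a,b,c\}$ and the sets $S_{ab}, S_{ac}, S_{bc}$.

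The next step is to show that in both subcases each $S_{xy}$ is a stable set with $|S_{xy}| \leq 2$. Indeed, if $u, u' \in S_{ab}$ were adjacent, then $\{a,b,u,u'\}$ induces a diamond ($K_4$ minus the edge $ab$), which handles the diamond-free subcase, while $\{a,u,u',c\}$ induces a co-claw ($K_3 + K_1$), which handles the co-claw-free subcase. So every $S_{xy}$ is stable, and since $S_{xy} \subseteq N(x)$ and claw-freeness forces $\alpha(N(x)) \leq 2$, we obtain $|S_{xy}| \leq 2$. In particular, $|V(G)| \leq 9$.

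With the sizes bounded, I would enumerate the configurations $(|S_{ab}|, |S_{ac}|, |S_{bc}|) \in \{0,1,2\}^3$ with at least two nonzero entries (as required for connectivity of $G$): up to the natural symmetry permuting $\{a,b,c\}$, this gives the cases $(1,1,0), (2,1,0), (2,2,0), (1,1,1), (2,1,1), (2,2,1), (2,2,2)$. Within each case, one determines which sets of inter-$S$ edges are compatible with claw-freeness, co-diamond-freeness, and the third forbidden subgraph; for example, in the $(1,1,1)$ diamond-free case with $S_{ab} = \{u\}, S_{ac} = \{w\}, S_{bc} = \{v\}$, having all three of $uv, uw, vw$ as edges would make $\{a, u, v, w\}$ a diamond, so that configuration is ruled out.

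Finally, for each graph $G$ that survives the enumeration, e-positivity is verified either by recognising $G$ as a path, a cycle, or a disjoint union of cliques (invoking the corresponding earlier lemmas) or by direct computation: one expands $X_G$ as a sum over stable-set partitions $\pi$ of $V(G)$ of monomial symmetric functions $m_{\lambda(\pi)}$ with the appropriate multiplicities, then converts to the elementary basis and checks that all coefficients are nonnegative. The main obstacle will be the bookkeeping in the case analysis together with the hand computation of $X_G$ for the few atypical small graphs that arise; however, because $|V(G)| \leq 9$, the verification is entirely finite.
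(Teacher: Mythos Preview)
Your proposal is correct and follows essentially the same route as the paper: reduce to the peculiar structure, show that in either subcase each oval $S_{xy}$ is stable of size at most two (your diamond and co-claw arguments match the paper's), list the seven symmetry classes of oval-size configurations, and then verify the finitely many surviving graphs. The paper carries out the final step by a computer check (reporting 36 graphs, including isomorphs, all $e$-positive) rather than by the hand computation you sketch, but the logical architecture is identical.
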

		\begin{proof} 
			Let $G$ have a structure defined in Figure \ref{fig:claw-codiamond-free}. If $G$ is ({\em claw, co-diamond, diamond})-free, then the vertices in each oval is a stable set (if $S_{x,y}$ contains an edge, then this edge and $\{x,y\}$ form a diamond). Similarly, if $G$ is ({\em claw, co-diamond, co-claw})-free, then each oval is a stable set (if $S_{x,y}$ contains an edge, then this edge form a co-claw with $\{x,z\}$ where $z$ is the vertex of the co-triangle different from $x$ and $y$).  However, if there are three or more vertices in a oval then there  exists an induced claw. Therefore each oval can have a maximum of two vertices.
			
			Since $G$ is connected, there must be two non-empty ovals. There are seven base cases to consider when placing vertices in each oval. For each of these base graphs, the number of possible edges (between the ovals) each graph can have are also noted:
			\begin{itemize}
				\item one vertex in two ovals (one possible edge)
				\item two vertices in an oval, one vertex in another oval (two possible edges)
				\item two vertices in two ovals (four possible edges)
				\item one vertex in all three ovals (three possible edges)
				\item two vertices in an oval, one vertex in the remaining two ovals (five possible edges)
				\item two vertices in two ovals, one vertex in the remaining oval (eight possible edges)
				\item two vertices in all three ovals (twelve possible edges)
			\end{itemize}

			For each of the cases, there are a finite number of possible graphs. For each of these graphs, we check if the graph is ({\em claw, co-diamond, diamond})-free/({\em claw, co-diamond, co-claw})-free and if it has an induced triangle. If it meets these requirements, we then check whether the $X_G$ is $e$-positive.

			Since the number of total cases is relatively small, they are easily verifiable through the use of a computer program. Combining every possible case listed above, there are only 36 graphs (including isomorphic cases) which are ({\em claw, co-diamond, diamond})-free or ({\em claw, co-diamond, co-claw})-free and have an induced triangle, and all of them are $e$-positive. 
		\end{proof}

		Thus, for ({\em claw, co-diamond, $H$})-free graphs, there are only three remaining cases to consider: ({\em claw, co-diamond, $C_4$})-free, ({\em claw, co-diamond, $2K_2$})-free, and ({\em claw, co-diamond, $K_4$})-free. \\

\begin{Observation}\label{lem:C4}
	If a peculiar graph $G$ is ({\em claw, co-diamond, $C_4$})-free, then each oval induces  a clique.  
\end{Observation}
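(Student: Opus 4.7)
The plan is to prove the contrapositive by exhibiting a forbidden induced $C_4$ whenever some oval fails to be a clique. Recall that in a peculiar graph, the co-triangle $\{a,b,c\}$ consists of three pairwise non-adjacent vertices, and any vertex of $S_{x,y}$ is adjacent to both $x$ and $y$ (and to neither of the remaining co-triangle vertices). This is the only structural fact I will need.

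Fix any oval, say $S_{a,b}$, and suppose toward a contradiction that it contains two distinct vertices $u,v$ with $uv \notin E(G)$. By definition of $S_{a,b}$, both $u$ and $v$ are adjacent to $a$ and to $b$. Since $\{a,b,c\}$ is a co-triangle, $ab \notin E(G)$. Restricted to $\{u,a,v,b\}$, the edges are precisely $ua, ub, va, vb$ and the non-edges are $ab$ and $uv$. This is exactly an induced $4$-cycle $u\mbox{-}a\mbox{-}v\mbox{-}b\mbox{-}u$, contradicting the assumption that $G$ is $C_4$-free. Hence every pair of vertices inside any oval must be adjacent, so each oval induces a clique.

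There is no real obstacle here; the argument uses only the adjacency pattern that defines $S_{x,y}$ together with the single non-edge $xy$ coming from the co-triangle, so the forbidden $C_4$ appears automatically once an oval non-edge is assumed. The only thing to double-check is that the four vertices $u,a,v,b$ are genuinely distinct, which is immediate because $a,b$ lie in the co-triangle while $u,v \in S_{a,b}$ lie outside of it.
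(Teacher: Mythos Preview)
Your argument is correct and is essentially identical to the paper's proof: both take two non-adjacent vertices $u,v$ in an oval $S_{x,y}$ and observe that $\{x,y,u,v\}$ induces a $C_4$. Your version just adds the routine check that the four vertices are distinct.
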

\begin{proof}
Suppose $G$ is ({\em claw, co-diamond, $C_4$})-free. If $S_{x,y}$ contains two non-adjacent vertices $u,v$, then the set $\{x,y,u,v\}$ induces a $C_4$.   
\end{proof}
Consider the graph $G$ of Figure~\ref{fig:claw-codiamond-free}. We will call $G$ the {\em generalized pyramid} if every oval is a clique and there are all edges between any two ovals, see Figure~\ref{fig:generalized-pyramid}. 
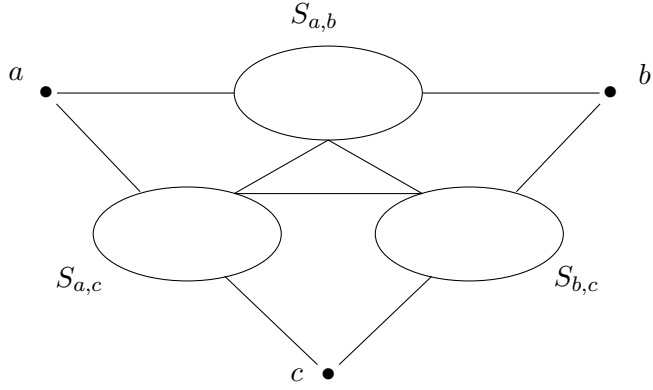
\begin{figure}
\begin{center}
\begin{tikzpicture} [scale = 1.25]
\tikzstyle{every node}=[font=\small]

\newcommand{\size}{1}

\newcommand{\GeneralizedPyramid}{1}{
    \path (0, 0) coordinate (g1);
    \path (g1) +(-3 * \size, 0) node (g1_1){}; 
    \path (g1) +(0, -3 * \size) node (g1_2){}; 
    \path (g1) +(3 * \size, 0) node (g1_3){}; 
    \path (g1) +(-2 * \size, -1.05 * \size) node (g1_4){}; 
    \path (g1) +(2 * \size, -1.05 * \size) node (g1_5){}; 
    \path (g1) +(-1 * \size, 0) node (g1_6){}; 
    \path (g1) +(-1.1 * \size, -1.95 * \size) node (g1_7){}; 
    \path (g1) +(1.1 * \size, -1.95 * \size) node (g1_8){}; 
    \path (g1) +(1 * \size, 0) node (g1_9){}; 
    \path (g1) +(0, -0.5 * \size) node (g1_10){}; 
    \path (g1) +(-1 * \size, -1.07 * \size) node (g1_11){}; 
    \path (g1) +(1 * \size, -1.07 * \size) node (g1_12){}; 
    \foreach \Point in {(g1_1), (g1_2), (g1_3)}{
        \node at \Point{\textbullet};
    }
    \draw (-1.5 * \size, -1.5 * \size) ellipse (1cm and 0.5cm);
    \draw (1.5 * \size, -1.5 * \size) ellipse (1cm and 0.5cm);
    \draw (0, 0) ellipse (1cm and 0.5cm);
    \draw   (g1_1) -- (g1_4.center)
            (g1_1) -- (g1_6.center)
            (g1_2) -- (g1_7.center)
            (g1_2) -- (g1_8.center)
            (g1_3) -- (g1_5.center)
            (g1_3) -- (g1_9.center)
            (g1_10.center) -- (g1_11.center)
            (g1_10.center) -- (g1_12.center)
            (g1_11.center) -- (g1_12.center);

    \node[text width=1cm] at (-3,0.2) {$a$};
    \node[text width=1cm] at (3.7,0.2) {$b$};
    \node[text width=1cm] at (0,-3) {$c$};
    \node[text width=1cm] at (0,0.8) {$S_{a,b}$};
    \node[text width=1cm] at (-2.5,-2) {$S_{a,c}$};
    \node[text width=1cm] at (2.8,-2) {$S_{b,c}$};
}

\end{tikzpicture}
\end{center}
\caption{The generalized pyramid graph.} 
\label{fig:generalized-pyramid}
\end{figure}

\begin{Lemma}\label{lem:2K2}
	If a peculiar graph $G$ is ({\em claw, co-diamond, $2K_2$})-free, then $G$ is the generalized pyramid. 
\end{Lemma}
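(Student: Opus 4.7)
The plan is to prove the two defining properties of the generalized pyramid: (a) each oval $S_{x,y}$ induces a clique in $G$, and (b) between any two distinct ovals every possible edge is present. Together these give that $G$ is the graph in Figure~\ref{fig:generalized-pyramid}. Throughout, I rely on the peculiar structure from Figure~\ref{fig:claw-codiamond-free}: the co-triangle $\{a,b,c\}$, the three ovals $S_{a,b}, S_{a,c}, S_{b,c}$ with at least two non-empty, and the defining adjacency that each vertex of $S_{x,y}$ is joined to $x$ and $y$ but not to the third vertex of the co-triangle.

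For step (a), suppose for contradiction some oval, say $S_{a,b}$, contains non-adjacent vertices $u,v$. Peculiarity forces a second non-empty oval, and by relabeling $a \leftrightarrow b$ if needed I may assume it is $S_{a,c}$; pick $w \in S_{a,c}$. Examine the 4-set $\{b, c, u, w\}$: the pair $bc$ is non-adjacent (co-triangle), $uc$ is non-adjacent (as $u \in S_{a,b}$), and $wb$ is non-adjacent (as $w \in S_{a,c}$), so the only possible edges are $bu$, $cw$, and $uw$. If $uw \notin E(G)$ the set is an induced $2K_2$, a contradiction. So $uw \in E(G)$, and the identical argument with $v$ in place of $u$ gives $vw \in E(G)$. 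But now $c, u, v$ are pairwise non-adjacent and all adjacent to $w$, so $\{w, c, u, v\}$ is an induced claw centred at $w$---the desired contradiction.

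For step (b), take $u \in S_{x,y}$ and $v \in S_{x',y'}$ in distinct ovals, and let $p$ be the unique element of $\{x,y\} \setminus \{x',y'\}$ and $q$ the unique element of $\{x',y'\} \setminus \{x,y\}$. If $u \not\sim v$ then $\{p, q, u, v\}$ contains only the two disjoint edges $up$ and $vq$: the pair $pq$ is non-adjacent (co-triangle), and $uq$ and $vp$ are non-adjacent since $q \notin \{x,y\}$ and $p \notin \{x',y'\}$. This is an induced $2K_2$, a contradiction, so $u \sim v$.

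Combining (a) and (b) shows that $G$ is exactly the generalized pyramid. The only subtlety is choosing, in each step, the correct 4-vertex set on which the forbidden $2K_2$ appears; in step (a) one additionally has to notice that the forced edges $uw, vw$ suddenly create a claw at $w$, so it is really the combination of $2K_2$-freeness and claw-freeness---not either alone---that drives the clique structure.
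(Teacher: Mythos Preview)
Your proof is correct and uses essentially the same approach as the paper: the $2K_2$-free condition forces every cross-oval pair to be adjacent, and a non-edge inside an oval then yields a claw centred at any vertex of a neighbouring oval (with the ``opposite'' co-triangle vertex as the third leaf). The only difference is order: the paper establishes the cross-oval edges first and then invokes them to get the claw in one line, whereas you derive the two needed cross-oval edges $uw,vw$ inline in step~(a) via the same $2K_2$ argument before exhibiting the claw $\{w,c,u,v\}$.
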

\begin{proof}
	Let $G$ be a peculiar ({\em claw, co-diamond, $2K_2$})-free graph. Suppose there are non-adjacent vertices $u,v$ with $u \in S_{a,b}$ and $v \in S_{b,c}$, then the set $\{a,u,c,v\}$ induces a $2K_2$. Thus, there are all edges between any two ovals. Suppose $S_{a,b}$ contains two non-adjacent vertices $u,v$. Assume without loss of generality that $S_{b,c}$ is non-empty. Then, for any vertex $x \in S_{b,c}$, the set $\{x,a,b,c\}$ induces a claw. So $G$ must be the generalized pyramid. 
\end{proof}
\begin{Lemma}\label{lem:K4}
	If a peculiar graph $G$ is ({\em claw, co-diamond, $K_4$})-free, then $G$ has at most 18 vertices.
\end{Lemma}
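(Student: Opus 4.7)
The plan is to bound the size of each oval $S_{a,b}$, $S_{a,c}$, $S_{b,c}$ separately and then sum. Within a single oval $S_{x,y}$, I will show two local structural constraints that, combined with a Ramsey-type argument, force $|S_{x,y}|\le 5$.

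First, I would argue that each oval $S_{x,y}$ is triangle-free. Suppose for contradiction that $u_1,u_2,u_3\in S_{x,y}$ are pairwise adjacent. Since each $u_i$ is adjacent to $x$ (by definition of the oval), the set $\{x,u_1,u_2,u_3\}$ is pairwise adjacent and hence induces a $K_4$, contradicting the assumption that $G$ is $K_4$-free. So $S_{x,y}$ contains no triangle. Second, I would argue that $\alpha(S_{x,y})\le 2$. If $u_1,u_2,u_3\in S_{x,y}$ are pairwise non-adjacent, then each $u_i$ is adjacent to $x$ and there are no other edges among $\{u_1,u_2,u_3\}$, so $\{x,u_1,u_2,u_3\}$ induces a claw centered at $x$, contradicting the assumption that $G$ is claw-free.

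Now comes the key step: $S_{x,y}$ is simultaneously triangle-free and has $\alpha(S_{x,y})\le 2$. By the Ramsey number $R(3,3)=6$, any graph on $6$ or more vertices contains either a triangle or an independent set of size $3$. Hence $|S_{x,y}|\le 5$ for each oval. Summing over the three ovals and adding the three vertices of the co-triangle gives
\[
|V(G)| \;=\; 3 + |S_{a,b}| + |S_{a,c}| + |S_{b,c}| \;\le\; 3 + 3\cdot 5 \;=\; 18,
\]
which is the desired bound.

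There is essentially no serious obstacle here — the whole argument is a direct consequence of the two induced-subgraph forbiddings applied inside a single oval together with $R(3,3)=6$. The only thing worth remarking is that the bound $18$ is almost certainly not tight (one could also exploit the cross-oval claws at $x$, noting that $S_{x,y}\cup S_{x,z}\subseteq N(x)$ is triangle-free with $\alpha\le 2$, and so $|S_{x,y}|+|S_{x,z}|\le 5$, giving $|V(G)|\le 10$); but for the purpose of Lemma~\ref{lem:K4}, which only needs finiteness so that the remaining (claw, co-diamond, $K_4$)-free peculiar graphs can in principle be enumerated, the clean bound of $18$ is sufficient and is all we need.
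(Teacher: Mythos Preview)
Your proof is correct and follows essentially the same approach as the paper: show that each oval is both triangle-free (else a $K_4$ with $x$) and co-triangle-free (else a claw at $x$), invoke $R(3,3)=6$ to bound each oval by $5$, and add the three co-triangle vertices to get $18$. Your extra remark tightening the bound to $10$ via $S_{x,y}\cup S_{x,z}\subseteq N(x)$ goes beyond what the paper does, but the core argument is identical.
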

\begin{proof}
	Consider a peculiar graph $G$ is ({\em claw, co-diamond, $K_4$})-free. Consider  the oval $S_{a,b}$. Now, let $T$ be a set with three vertices in $S_{a,b}$. If $T$ is a stable set, the $T$ and $a$ induce a claw. If $T$ is a clique, then $T$ and $a$ induce a $K_4$. So $S_{a,b}$ contains no triangle, and no co-triangle. It follows from a folklore (Ramsey's theorem) that $S_{a,b}$ has at most five vertices. So $G$ has at most $5+5+5+3= 18$ vertices.  
\end{proof}
By Lemma~\ref{lem:K4}, there are a finite number of graphs to verify. Brute force checking is theoretically possible, but it is infeasible unless further properties of ({\em claw, co-diamond, $K_4$})-free graphs are discovered.


\section{$H=$\{claw, $F$, co-$F$\}}

	We can also consider which graphs are $e$-positive which are ({\em claw, $H$, co-$H$})-free. Referring back to Figure \ref{fig:4vertex-graphs}, we see that:
	\begin{itemize} 
		\item $K_4$ is complementary to $4K_1$
		\item diamond is complementary to co-diamond
		\item $C_4$ is complementary to $2K_2$
		\item paw is complementary to co-paw
		\item claw is complementary to co-claw
		\item $P_4$ is self-complementary. \\
	\end{itemize}
The case of the (claw, $P_4$)-free graphs was settled by Tsujie \cite{Tsujie}. The remaining five cases are settled by the results of this paper.
	\begin{Theorem} \label{thm:clawdiamondcodiamond}
		If $G$ is (claw, diamond, co-diamond)-free, then $X_G$ is $e$-positive.
	\end{Theorem}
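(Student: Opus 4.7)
The plan is to observe that the hypothesis here, $G$ being $(\text{claw}, \text{diamond}, \text{co-diamond})$-free, is literally the same condition as the first alternative of Theorem~\ref{claw-diamond_co-diamond}, so in principle the statement is an immediate consequence. Still, to give a clean stand-alone argument in this section, I would re-run the same strategy: use the earlier structural reductions to cut the problem down to the peculiar case, and then argue directly that the peculiar case collapses to a finite check.

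First I would knock out the easy reductions. If $G$ is disconnected, Lemma~\ref{lem:disconnected-e-positive} handles it. If $\alpha(G) \ge 4$, Lemma~\ref{lem:big-alpha} finishes. If $G$ is triangle-free, Lemma~\ref{lem:claw_triangle_free} together with Lemmas~\ref{lem:e_path} and \ref{lem:e_cycle} gives $e$-positivity; if $G$ is co-triangle-free, Lemma~\ref{lem:cotrianglefreeepos} applies. So the only remaining case is a peculiar graph: connected, $\alpha(G)=3$, with a triangle and a co-triangle $\{a,b,c\}$, and the structure of Figure~\ref{fig:claw-codiamond-free}, where every other vertex lies in exactly one of the ovals $S_{a,b}$, $S_{a,c}$, $S_{b,c}$.

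In the peculiar case I would use the two extra hypotheses to collapse the ovals. Diamond-freeness forces each $S_{x,y}$ to be a stable set, since any edge $uv$ inside $S_{x,y}$ would yield a diamond on $\{x,y,u,v\}$. Claw-freeness then bounds $|S_{x,y}| \le 2$, since three independent vertices in $S_{x,y}$ would form a claw with $x$ as center. Hence a peculiar $(\text{claw},\text{diamond},\text{co-diamond})$-free graph has at most $3 + 2 + 2 + 2 = 9$ vertices, and the only remaining freedom is which cross-oval edges appear.

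The last step is a finite enumeration: split by the triple of oval sizes (the same seven base cases listed in the proof of Theorem~\ref{claw-diamond_co-diamond}), list the possible cross-oval edge patterns, discard any configuration that contains an induced claw, diamond, or co-diamond, and directly compute $X_G$ in the elementary basis for each survivor. The part I expect to be the main obstacle is making sure the enumeration is genuinely exhaustive and that no borderline case slips through; a short computer check (which in this paper already produced the 36 admissible triangle-containing graphs, all $e$-positive) is the cleanest way to discharge it, and it matches the computer-assisted verification already used for Theorem~\ref{claw-diamond_co-diamond}.
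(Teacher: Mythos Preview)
Your proposal is correct and matches the paper's approach: the paper's own proof is the one-line citation ``By Theorem~\ref{claw-diamond_co-diamond}, $X_G$ is $e$-positive,'' which is exactly the immediate consequence you identify in your first paragraph. The detailed re-run you sketch afterwards simply unpacks that theorem's argument, so there is no divergence in method.
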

	\begin{proof}
		By Theorem \ref{claw-diamond_co-diamond}, $X_G$ is $e$-positive. \\
	\end{proof}

	\begin{Theorem} \label{thm:clawpawcopaw}
		If $G$ is (claw, paw, co-paw)-free, then $X_G$ is $e$-positive.
	\end{Theorem}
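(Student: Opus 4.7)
The plan is to observe that this theorem is a direct corollary of Theorem~\ref{thm:claw_paw_free} (or equivalently, of Theorem~\ref{thm:claw_copaw_free}). Any (claw, paw, co-paw)-free graph is in particular (claw, paw)-free, so forbidding the co-paw in addition merely restricts us to a subclass of a class whose e-positivity has already been established.

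More concretely, I would simply note that if $G$ is (claw, paw, co-paw)-free, then the induced subgraph hypothesis forbidding the paw alone already suffices: by Theorem~\ref{thm:claw_paw_free}, $X_G$ is $e$-positive. No further structural analysis of (claw, paw, co-paw)-free graphs is needed, since the proof of Theorem~\ref{thm:claw_paw_free} already does all the work — it reduces via Olariu's theorem (Theorem~\ref{thm:olariu}) to the two cases where each connected component is either triangle-free (hence a $P_k$ or $C_k$ by Lemma~\ref{lem:claw_triangle_free}, which are e-positive by Lemmas~\ref{lem:e_path} and~\ref{lem:e_cycle}) or a complete multipartite graph with $\alpha \leq 2$ (e-positive by Lemma~\ref{lem:cotrianglefreeepos}), all combined via Lemma~\ref{lem:chromatic_disjoint_union}.

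There is essentially no obstacle: the entire proof is a one-line containment of graph classes. The only thing worth remarking on is the logical point that imposing additional forbidden induced subgraphs on top of a class with a known positivity result automatically preserves that positivity, so including the co-paw as an extra forbidden subgraph can only shrink the class, not enlarge it. Thus this theorem fits into the paper as a bookkeeping consequence of the (claw, $F$)-free results of the previous section, mirroring the way Theorem~\ref{thm:clawdiamondcodiamond} was obtained as an immediate consequence of Theorem~\ref{claw-diamond_co-diamond}.
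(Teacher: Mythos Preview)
Your proposal is correct and matches the paper's own proof, which simply cites Theorem~\ref{thm:claw_paw_free} and Theorem~\ref{thm:claw_copaw_free}. As you observe, either one alone already suffices since (claw, paw, co-paw)-free graphs form a subclass of (claw, paw)-free graphs.
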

	\begin{proof}
		By Theorem \ref{thm:claw_paw_free} and Theorem \ref{thm:claw_copaw_free}, $X_G$ is $e$-positive. \\
	\end{proof}

	\begin{Theorem} \label{thm:negative}
		If $G$ is (claw, $K_4, 4K_1$)-free, (claw, $C_4, 2K_2$)-free, or (claw, co-claw)-free, then $X_G$ may not be $e$-positive.
	\end{Theorem}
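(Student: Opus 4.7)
The plan is to re-use the $3$-sun graph depicted in Figure \ref{fig:ThreeSun} as a single counterexample that handles all three cases of the theorem at once, leveraging the verification already carried out in Lemma \ref{thm:somethm}. That lemma records that the $3$-sun is simultaneously (claw, $K_4$)-free, (claw, $4K_1$)-free, (claw, $C_4$)-free, (claw, $2K_2$)-free, and (claw, co-claw)-free, and that its chromatic symmetric function equals $6e_{3,2,1} - 6e_{3,3} + 6e_{4,1,1} + 12e_{4,2} + 18e_{5,1} + 12e_{6}$ (from \cite{Sold}), so the negative coefficient of $e_{3,3}$ witnesses failure of $e$-positivity.

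The key observation I would invoke is the monotonicity of $H$-freeness in $H$: if $G$ is $F$-free for every $F \in H$, then $G$ is $H$-free. Combining this with the forbidden-subgraph list from Lemma \ref{thm:somethm}, the $3$-sun is (claw, $K_4$, $4K_1$)-free (since it is both (claw, $K_4$)-free and (claw, $4K_1$)-free), it is (claw, $C_4$, $2K_2$)-free (since it is both (claw, $C_4$)-free and (claw, $2K_2$)-free), and it is (claw, co-claw)-free (as recorded directly). In each case the $3$-sun lies in the class named by the theorem while failing to be $e$-positive, which completes the proof.

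There is no substantive obstacle in this plan: all three cases collapse to the same witness, and the verifications that the $3$-sun avoids each of the forbidden induced subgraphs are already in hand from Lemma \ref{thm:somethm}. The only conceivable refinement would be to seek counterexamples within even smaller classes, say by forbidding some further four-vertex graph, but that is not required by the statement of Theorem \ref{thm:negative}.
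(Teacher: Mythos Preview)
Your proposal is correct and matches the paper's own proof exactly: the paper simply says to use the $3$-sun of Figure~\ref{fig:ThreeSun} and the reasoning of Lemma~\ref{thm:somethm}, which is precisely what you do. Your explicit remark about monotonicity of $H$-freeness is a harmless elaboration of that one-line argument.
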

	\begin{proof}
	Use the $3$-sun graph in Figure \ref{fig:ThreeSun} and the same reasoning as in Lemma \ref{thm:somethm}.
	\end{proof}

\section{Concluding Remarks}

We have considered the e-positivity question for the chromatic symmetric functions for two different classes of $H$-free graphs: $H = \{claw, F\}$ and $H=$\{{\em claw}, $F$, {\em co-}$F$\},  where  $F$ is a four-vertex graph.  In the case of $H = \{claw, F\}$ there are ten different cases to consider, of which one of these, the case  $H=\{ claw, P_4\}$, was previously proved $e$-positive by Tsujie \cite{Tsujie}.   Here we settle eight of the remaining cases, leaving  just the case of $H=$\{{\em claw, co-diamond}\}.
Based on our results and our investigations towards a result for  $H=$\{{\em claw, co-diamond}\}, we propose the following two open problems:

\begin{OpenProblem}
Are generalized pyramids $e$-positive?
\end{OpenProblem}

\begin{OpenProblem}
Are (claw, co-diamond)-free graphs $e$-positive?
\end{OpenProblem}

\section*{Acknowledgements}

This work was supported by the Canadian Tri-Council Research Support
Fund.  The 1st and 2nd
authors (A.M.~Hamel and C.T.~Ho\`ang) were each supported by individual
NSERC Discovery Grants.  The 3rd
author (J.E.~Tuero) was supported by an NSERC
Undergraduate Student Research Award (USRA).


\begin{thebibliography}{bb}
\bibitem{CV} S. Cho and S. van Willigenburg, Chromatic classical symmetric functions, {\em J. Comb.}, to appear.  Also arXiv:1508.07670.
\bibitem {G} V. Gasharov, Incomparability graphs of $(3+1)$-free posets are s-positive, {\em Disc. Math.}  157 (1996), 193--197.
\bibitem{GJ} I. P. Goulden and D. M. Jackson, Immanants of combinatorial matrices, {\em J. Algebra} 148 (1992), 305--324.
\bibitem{Gr}  C. Greene, Proof of a conjecture on immanants of the Jacobi-Trudi matrix, {\em Linear Algebra Appl.} 171 (1992), 65--79.
\bibitem{GP} M. Guay-Paquet, A modular relation for the chromatic symmetric function of $(3+1)$-posets, arXiv:1306.2400, 2013.
\bibitem{HR} C.T. Hoang and B. Reed, On the co-$P_3$-structure of perfect graphs, {\em SIAM J. Disc. Math.} 18 (2005), 571--576.
\bibitem{LM}  V.V. Lozin and D.S. Malyshev, Vertex coloring of graphs with few obstructions, {\em Disc. Appl. Math.}, (2015),
http://dx.doi.org/10.1016/j.dam.2015.02.015.
\bibitem{O} Olariu, Paw-free graphs, {\em Information Proc. Letters} 28 (1988), 53--54.
\bibitem{Sold} R.P. Stanley, A symmetric function generalization of the chromatic polynomial of a graph, {\em Adv. Math.} 111 (1995), 166--194.
\bibitem{S} R. P. Stanley, Graph colorings and related symmetric functions: ideas and applications, {\em Disc. Math.} 193 (1998), 267-286.
\bibitem{StanleyBook} R.P. Stanley, {\em Enumerative Combinatorics, Vol 2}, Cambridge: Cambridge University Press, 1999.
\bibitem{SS} R. P. Stanley  and J. R. Stembridge, On immanants of Jacobi-Trudi Matrices and permutations with restricted position,{ \em JCTA} 62 (1993), 261--279.
\bibitem{Tsujie} S. Tsujie, The chromatic symmetric functions of trivially perfect graphs and cographs, arXiv: 1707.0458, 2017.
\end{thebibliography}
\end{document}